\documentclass[12pt]{amsart}
\usepackage[margin=3cm]{geometry}
\usepackage{amssymb}
\usepackage{latexsym}
\usepackage[all,cmtip,arrow,2cell]{xy}
\usepackage{graphicx}
\usepackage{hyperref}

\hyphenation{group-oid group-oids quasi-hori-zon-tal e-di-tion}

\theoremstyle{plain}
\newtheorem{thm}{Theorem}[section]
\newtheorem{prop}[thm]{Proposition} 
\newtheorem{lemma}[thm]{Lemma}
 
\newtheorem{dfn}[thm]{Definition}

\theoremstyle{definition}
\newtheorem{rmk}[thm]{Remark}
\newtheorem{ex}[thm]{Example} 
\newtheorem{notation}[thm]{Notation}

\newcommand{\mapsfrom}{\reflectbox{$\mapsto$}}
\newcommand{\cinf}{C^{\infty}}
\newcommand{\tot}{\rm{tot}}
\newcommand{\en}{\rm{en}}
\newcommand{\mom}{\rm{mo}}

\newcommand{\reals}{{\mathbb R}}

\newcommand{\integers}{{\mathbb Z}}

\newcommand{\vol}{{\rm vol}} 

\newcommand{\gradg}{\grad _g}
\newcommand{\gradgam}{\grad _\gamma}

\newcommand{\cald}{{\mathcal D}} \newcommand{\cale}{{\mathcal E}}
\newcommand{\calf}{{\mathcal F}} \newcommand{\cali}{\mathcal{I}} 
 \newcommand{\calh}{{\mathcal H}}
 \newcommand{\call}{{\mathcal L}}
\newcommand{\calm}{{\mathcal M}}  

\newcommand{\calp}{{\mathcal P}}

\newcommand{\cals}{{\mathcal S}}
 \newcommand{\calv}{{\mathcal V}}
\newcommand{\calu}{{\mathcal U}} 
\newcommand{\calx}{{\mathcal X}}

\newcommand{\arrows}{\,\lower1pt\hbox{$\longrightarrow$}\hskip-.24in\raise2pt
             \hbox{$\longrightarrow$}\,}
\newcommand{\defequal}{\stackrel{\mbox {\tiny {def}}}{=}}

\newcommand{\from}{\leftarrow}

\newcommand{\calF}{\mathcal{F}}

\newcommand{\calL}{\mathcal{L}} 
\newcommand{\calU}{\mathcal{U}} 
\newcommand{\calX}{\mathcal{X}}
\newcommand{\bbR}{\mathbb{R}} 
 
\newcommand{\Lie}{\mathcal{L}}

\DeclareMathOperator{\Tr}{Tr}

\DeclareMathOperator{\divergence}{div}
\DeclareMathOperator{\grad}{grad}
\DeclareMathOperator{\Ric}{Ric}

\begin{document}

\title[Groupoid symmetry and constraints in general relativity]
{Groupoid symmetry and constraints\\ in general 
relativity} 

\author[C.~Blohmann]{Christian Blohmann}
\address{Max-Planck-Institut f\"ur Mathematik, Vivatsgasse 7, 53111 Bonn, Germany}
\email{blohmann@mpim-bonn.mpg.de}

\author[M.C.B.~Fernandes]{Marco Cezar Barbosa Fernandes}
\address{Instituto de F\'isica, Universidade de Bras\'ilia,
70910-900, Bras\'ilia, DF, Brasil}
\email{mcezar@fis.unb.br}

\author[A.~Weinstein]{Alan Weinstein}
\address{Department of Mathematics, University of California, Berkeley, CA 94720, USA.}
\email{alanw@math.berkeley.edu}

\thanks{Partial support for Christian Blohmann was provided by a Marie Curie
    Fellowship from the European Union, for Marco Fernandes 
  by a CAPES grant from the Government of Brazil, and for
Alan Weinstein by grants
  DMS-0204100 and DMS-0707137 from the US National Science Foundation.}

\subjclass[2010] {83C05 (Primary); 58H05 (Secondary)}


\keywords{Einstein equations, constraints, Poisson brackets, 
groupoid, Lie algebroid, diffeology}

\begin{abstract}
  When the vacuum Einstein equations are cast in the form of hamiltonian
  evolution equations, the initial data lie in the cotangent bundle
  of the manifold $\calm\Sigma$ of riemannian metrics on a Cauchy hypersurface
  $\Sigma$.   As in
   every lagrangian field theory with symmetries, the
  initial data must satisfy constraints. But, unlike those of gauge
  theories, the constraints of general relativity do not arise as
  momenta of any hamiltonian group action.  In this paper, we show
  that the bracket relations among the 
constraints of general relativity are identical to the bracket
relations in the Lie algebroid of a
groupoid consisting of diffeomorphisms between space-like
  hypersurfaces in spacetimes.   

A direct connection is still missing between the constraints themselves, whose
definition is closely related to the Einstein equations, and our
groupoid, in which the Einstein 
equations play no role at all.   We discuss some of the difficulties
involved in making such a connection.
\end{abstract} 

\dedicatory{To Darryl Holm, for his 64th  birthday}

\maketitle

\section{Introduction}
\label{sec-intro}
The vacuum Einstein equations
 state that
the Ricci curvature $\Ric (g)$ of a lorentzian metric $g$ 
is identically zero.  Recast
as evolution equations, they become a hamiltonian
system on the cotangent bundle of the manifold\footnote{In this paper,
  we will actually
 treat 
spaces of smooth functions 
as diffeological spaces rather than as
 Fr\'echet manifolds.  In Appendix A, we review the theory of
diffeology, concentrating on the aspects
which are relevant to our work.  
}  ${\mathcal M}\Sigma$
of smooth riemannian metrics on a manifold
 $\Sigma$ which represents
the typical Cauchy hypersurface\footnote{In general
  relativity, $\Sigma$ has dimension 3, 
but that assumption is not necessary for anything
  we do in this paper.}.  Each element of  $T^*{\mathcal M}\Sigma$
may be identified with a pair $(\gamma,\pi)$, where $\gamma$ is a
riemannian metric and $\pi$ is a symmetric
covariant 2-tensor field.\footnote{
Strictly speaking, a cotangent vector $\pi$ to ${\mathcal M}\Sigma$
should be a contravariant symmetric 2-tensor density, but 
we may use the metric $\gamma$ and its associated volume element
 to identify covariant and contravariant tensors, and to
identify scalar functions with densities.  In addition, we 
consider $\pi$ as an endomorphism of the tangent bundle $T\Sigma$ when
we form $\pi^2$ and take traces.}

It has long been known\footnote{See
 Section \ref{subsec-history} below for historical remarks and references.} 
that, for a given  $\gamma$ and $\pi$ to be admissible as initial conditions for
the Einstein equations, they must satisfy a system of constraint equations.
These equations  may be derived either geometrically 
from the Gauss-Codazzi equations relating the intrinsic and extrinsic
curvatures of a hypersurface, or from a lagrangian formulation of the
Einstein equations in terms of the
Einstein-Hilbert action.  The equations are:
\begin{align}
{\mathbf C}_{\mom}(\gamma,\pi) &:= -2 \divergence_\gamma
\pi = 0\,,\qquad\\ 
{\mathbf C}_{\en}(\gamma,\pi)
&:=  -R(\gamma) + \Tr_\gamma (\pi^2) - \tfrac{1}{\dim \Sigma - 1} (\Tr_\gamma \pi)^2 = 0\,.
\end{align}
The {\em momentum
  constraint}, ${\mathbf C}_{\mom}$,
maps $T^*{\mathcal M}\Sigma$ to the
space $\calx{\Sigma}$ of vector fields on $\Sigma$, while the
{\em energy constraint}, ${\mathbf
  C}_{\en}$,  takes values in the space $\calf{\Sigma}$ of 
scalar functions on $\Sigma$.
The constraints may be viewed as those components of the Einstein tensor
which involve directions normal to the Cauchy hypersurface.  (See Appendix E of
\cite{Wald} for details.)  

The {\em constraint set} ${\mathcal C} \subset 
T^*{\mathcal M}\Sigma$, where the constraints
are all equal to zero,
has two properties which always
hold for the zero sets of  momentum\footnote{Note that we are
  using two meanings of ``momentum'' in this discussion, first in a
  slightly extended version of the usual ``mass times velocity'', and
  second in the sense used in the theory of hamiltonian
  actions.   In the latter sense, the term ``moment'' is often used
  instead.  (See the footnote on p.~133 of \cite{mi-we:moments}) for
  some remarks on the two nomenclatures.)}
maps of proper hamiltonian group actions.

\begin{itemize}
\item
The constraint set is coisotropic; i.e., for any two
functions vanishing on ${\mathcal C}$, their Poisson bracket vanishes
there as well.  (This follows from the bracket formulas \eqref{brackets}
below.)  Consequently, on its regular part,
namely those pairs $(\gamma,\pi)$ with no common infinitesimal
symmetries, ${\mathcal C}$ is foliated by {\em characteristic
  submanifolds} whose tangent spaces are spanned by the hamiltonian
vector fields whose hamiltonians vanish on ${\mathcal C}$.   In the
group action setting, these would be the orbits of the symmetry group.   

\item
As shown by Arms, Marsden, and Moncrief
\cite{ar-ma-mo:structure}, the constraint set has quadratic
singularities at the points 
which do admit infinitesimal symmetries.  In the group action setting,
this would follow from the linearizability of proper actions and the equivariant
Darboux theorem.  
\end{itemize}

The aim of the research described in this paper has been to identify
the symmetry structure responsible for the constraints and their
Poisson bracket relations.
What we have found is that the bracket relations, rather than coming from the
Lie algebra of a symmetry group, 
are those of 
a Lie algebr{\em oid} which is derived from a group{\em
  oid}\footnote{The definition of groupoid is reviewed briefly in
  Section \ref{sec-diffgroupoid}.  We refer to \cite{ma:general} for a
  full treatment of Lie algebroids and Lie groupoids.}  of
diffeomorphisms between space-like hypersurfaces in Lorentz manifolds.
Unfortunately, this groupoid, which encodes
 the arbitrariness in the choice of
initial value hypersurface for the Einstein equations as well as of
coordinates on this hypersurface, lives over a much larger space than the 
one where the constraints are defined.  It remains to be seen what the
relevant structure is which connects our groupoid with the constraints
themselves.  

Since the constraints ${\mathbf C}_{\mom}$ and ${\mathbf C}_{\en}$ are
vector valued, we break them into scalar-valued components to form
their Poisson brackets.
Following DeWitt
 \cite{dw:QuantGrav}, we 
get these components by pairing the constraints by
integration against vector fields and
functions on $\Sigma$, obtaining for each vector field $X$ and
function $\phi$ the following
real-valued constraint function on  $T^*{\mathcal M}\Sigma$: 
\begin{equation}
\label{eq:Constraint1}
  C_{(X,\phi)} (\gamma,\pi)
  = \int_\Sigma \bigl\{
   \gamma(X, {\mathbf C}_{\mom}(\gamma,\pi))  
   + \phi {\mathbf C}_{\en}(\gamma,\pi) 
  \bigr\} \vol_\gamma \,.
\end{equation}
Since $C_{(X,\phi)}$ is the sum of 
 $C_X\defequal C_{(X,0)}$ and $C_\phi  \defequal C_{(0,\phi)}$, it suffices
to write down the Poisson bracket relations among these terms.  These
were found by
DeWitt 
to be:
\begin{align} \label{brackets}
\{C_X,C_Y\} &=  C_{[X,Y]}   \notag \\
\{C_X,C_\phi\} &=  C_{X\cdot\phi}   \\
\{C_\phi,C_\psi\} &=  C_{\phi\,\gradgam\psi - \psi\,\gradgam\phi} \notag  = 
C_{\gamma{^{-1}}(\phi d \psi - \psi d  \phi)} \,,
\end{align}
where the metric $\gamma$ is here considered as a bundle map from
$T\Sigma$ to $T^*\Sigma$, so that its inverse takes the differentials
of functions to their gradients.

The coisotropic property of the constraint set follows immediately
from \eqref{brackets} above:
the bracket of any two constraint functions vanishes on the
constraint set.   On the other hand, 
the dependence of $\{C_\phi,C_\psi\}$  on the metric $\gamma$
means that the brackets are not those of a
fixed Lie algebra structure on $\calX\Sigma \oplus \calF\Sigma$.
Of course, we may freeze the metric $\gamma$ to
some value $\overline\gamma$,
but then the 
resulting bracket $\{~,~\}_{\overline\gamma}$ will
 not satisfy the Jacobi identity.  The
anomaly appears in the jacobiator of a momentum constraint and two
energy constraints, namely:
\begin{equation*}
\{C_X,\{C_\phi,C_\psi\}_{\overline\gamma}\}_{\overline\gamma} + {\rm circ. perm.} 
= C_{\mathcal L_X (\overline \gamma^{-1})(\phi d \psi - \psi d \phi)} \,,
\end{equation*}
which vanishes only in special cases, such as when $X$ is a
Killing vector field for $\overline\gamma$.

We must therefore renounce the idea that the constraints should be the
momentum map of a symmetry group (such as the group of diffeomorphisms
of spacetime).  Instead, we use a groupoid.

The objects of our groupoid will be the isometry classes of embeddings of
$\Sigma$ as a cooriented, space-like hypersurface in a
lorentzian manifold.  We call these $\Sigma$-universes. 
The morphisms of the groupoid, which we call $\Sigma$-evolutions,
 will be the isometry classes of 
\emph{pairs} of such embeddings into the \emph{same}
 target lorentzian manifold; they can
be identified with diffeomorphisms between the image hypersurfaces.  
Remarkably, the Lie
algebroid of this groupoid is naturally a trivial bundle, and the 
bracket relations among its constant sections turn out to reproduce precisely
the bracket relations 
\eqref{brackets} among the constraints.  

To facilitate computations with the rather abstractly defined spaces of
isometry classes,  
we will use the fact that each $\Sigma$-universe has near the image of
$\Sigma$ a
unique gaussian representative, i.e. a metric near (in the appropriate
sense) $\Sigma\times \{0\}$ on $\Sigma\times\reals$ for which the
paths $t\mapsto (x,t)$ for each $x\in \Sigma$ are time-like geodesics
parametrized by (negative) arc-length and normal to $\Sigma\times \{0\}$.

As an aside, we will show that the
groupoid of $\Sigma$-evolutions is equivalent in a precise sense to
the groupoid of isometries between those Lorentz manifolds which admit a
cooriented, space-like hypersurface diffeomorphic to $\Sigma$.

There remains a significant gap between our results and 
a satisfactory  geometric explanation of the relations
\eqref{brackets} among the constraints, since our Lie algebroid lies
over a space much bigger than the phase space  $T^*\calm\Sigma$ on
which the constraints live.  The latter space may be identified, via a
natural riemannian 
metric on $\calm\Sigma$, with the tangent bundle $T\calm\Sigma$, and
this tangent bundle may be in turn identified with the space of 
1-jets
around $\Sigma$ of $\Sigma$-universes.  
Differentiation by tangent vectors based along $\Sigma$ gives a
natural projection from the $\Sigma$-universes to the 1-jets.  
Unfortunately, it does not seem to be possible to make our Lie
algebroid descend along this projection.

To end this introduction, we note that Teitelboim \cite{Teitelboim} already
gave an argument leading to the Poisson bracket relations among the constraints
using pure geometry, without any appeal to Einstein's equations.   In some sense,
the accomplishment of our paper is to put Teitelboim's argument in its proper
mathematical context, that of groupoids and Lie algebroids.

\section*{Acknowledgements}

For comments, encouragement, and advice, we would like to thank 
John Baez, Robert Bryant, Uli Bunke, Arthur Fischer, Darryl Holm, Patrick Iglesias-Zemmour, 
 Klaas Landsman, Jerry Marsden, Peter
Michor, and Phil Morrison,  as well as the audiences over several years 
who heard us present preliminary (and sometimes overoptimistic)
versions of this work, beginning with the 2007 Lausanne conference 
on the occasion of Darryl's 60th birthday.

Much of this work was done when we were away from home.  We would like
to extend our appreciation for their hospitality
to the Department of Mathematics at UC Berkeley (Blohmann and
Fernandes), 
the Freiburg Institute of Advanced Studies and the Mathematische
Forschungsinstitut Oberwolfach
 (Blohmann), and the \'Equipe d'Analyse
Alg\'ebrique at the Institut Math\'ematique de Jussieu (Weinstein).

\section{Universes}
We turn now to the construction of a groupoid whose Lie algebroid
is a trivial bundle with fibre
$\calX\Sigma \oplus \calF\Sigma$, such that
the bracket of constant
sections is given by (\ref{brackets}).  
\subsection{$\Sigma$-universes}

A connected lorentzian manifold $M$ will be called a {\bf
  spacetime}\footnote{Note that any such manifold satisfies an
  Einstein equation of the form $\Ric (g) = T$ if the energy-momentum
  tensor $T$ is simply defined by that equation.}  
(whether or not its dimension is 4).  
$M$ will be called $\Sigma$-{\bf adapted}
if it admits a cooriented (i.e. with an orientation of the normal
bundle), 
proper embedding of $\Sigma$ as a
space-like hypersurface; such an embedding will be called a
$\Sigma$-{\bf space} in $M$, and a pair consisting of a spacetime and
a $\Sigma$-space in it will be called a $\Sigma$-{\bf spacetime.}

Since the identity of specific points in the ambient spacetime 
is irrelevant\footnote{Dirac
  \cite{di:GravHam} wrote, ``I am 
  inclined to believe \ldots that four-dimensional symmetry is not a
  fundamental property of the physical world.''   Pirani
  \cite{pi:diracreview},  reviewing Dirac's paper, ``finds it
  difficult to concur''.}  
to the evolution of metrics on a manifold
 $\Sigma$, it is natural to make Definition \ref{df:universe} below.

\begin{notation}
For better compatibility with composition, we will often represent mappings
by arrows going from right to left; hence we write $\cinf(X,Y)$ 
for the smooth
mappings \emph{to} $X$ \emph{from} $Y$.
\end{notation}

\begin{dfn}
\label{df:universe}  
A $\Sigma$-{\bf universe} is an equivalence class of $\Sigma$-spacetimes, 
where $M \stackrel{i}{\hookleftarrow} \Sigma$ 
and  $ M' \stackrel{i'}{\hookleftarrow} \Sigma$  are 
equivalent if there is an isometry $M' \stackrel{\psi}{\leftarrow} M$ which is
consistent with the coorientations and which satisfies $\psi \circ i =
i'$ .  We will denote the set\footnote{The collection of all
  $\Sigma$-spacetimes
 is, like the collection of all sets,
  a ``class'' in the set-theoretic sense rather than a set.
  But the $\Sigma$-universes $\calU\Sigma$ do form a set
  because every connected manifold is diffeomorphic to a submanifold
  of some $\reals^n$.}  of all $\Sigma$-universes by $\calU\Sigma$.
\end{dfn}

To define a diffeology on 
 $\calU\Sigma$, since different $\Sigma$-universes may be represented
 by different lorentzian manifolds, we follow 
the pattern described in Appendix A for functional
 diffeologies on spaces of mappings with varying domains.
Namely, we stipulate that 
a parametrization $\calU\Sigma\stackrel{\phi}{\from} P$ is 
smooth
if: 
\begin{enumerate}
\item each point of $P$ has a neighborhood $\calu$ for which there is
  a fixed manifold $M$ such that the
  $\Sigma$-universes $\phi (p)$,
$p\in \calu$, are
represented by $\Sigma$-spacetimes
$M_p\stackrel{i_p}{\hookleftarrow} \Sigma,$ where all the 
$M_p$ are open subsets of $M$;

\item $i_p\mapsfrom p$ is a smooth map 
 $C^\infty(M,\Sigma) \leftarrow P$.

\item $M_{\tot}=\{(x,p)| p\in P~{\rm and~} x\in M_p\}$ 
is open in $M\times P$; 

\item
the lorentzian structures on the fibres are the
restrictions of a smooth section over $M_{\tot}$ of the bundle of
symmetric fibrewise 2-forms; 
\end{enumerate}

The following proposition gives a first description of the tangent
bundle of $\calU\Sigma$.  A more explicit description will follow the
introduction of gaussian splittings.

\begin{prop}
\label{prop:tangentuniverses}
The tangent cone to $\calU\Sigma$ at a $\Sigma$-universe
$[(M,g)\stackrel{i}{\hookleftarrow} \Sigma]$
is a vector space which may be
identified with the quotient of the space $\Gamma(S^2(T^*M))$
of symmetric 2-forms on $M$
by the image of the  map $\Gamma(S^2(T^*M)) 
\stackrel{\calL g}{\from}\calX_{i(\Sigma)} M$ 
taking each vector field $Z$ defined on $M$ and 
vanishing on $i(\Sigma)$ to the Lie derivative $\calL_Z g$.   
\end{prop}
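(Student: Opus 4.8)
The plan is to compute the tangent cone directly from the definition of the diffeology on $\calU\Sigma$, following the recipe in Appendix A for functional diffeologies with varying domains. A tangent vector at $[(M,g)\hookleftarrow\Sigma]$ is represented by a smooth $1$-parameter plot $\phi\colon \calU\Sigma \from (-\epsilon,\epsilon)$ with $\phi(0) = [(M,g)\hookleftarrow\Sigma]$. By the four clauses defining smooth parametrizations, after shrinking $\epsilon$ we may represent this plot by a fixed manifold $M$, a smooth family of embeddings $i_t\colon \Sigma \to M_t \subseteq M$, a total space $M_{\tot}$ open in $M\times(-\epsilon,\epsilon)$, and a smooth fibrewise lorentzian metric $g_t$ on the fibres. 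First I would differentiate at $t=0$: the derivative $\dot g_0 \in \Gamma(S^2(T^*M))$ is a symmetric $2$-form on $M$ (on the $t=0$ fibre, which is an open neighborhood of $i_0(\Sigma)$; since the tangent cone is local along $\Sigma$, working on a neighborhood of $i(\Sigma)$ is harmless, and I will either restrict attention to a tubular neighborhood or note that $\Gamma(S^2(T^*M))$ here means germs along $i(\Sigma)$). This gives a candidate linear map from plots to $\Gamma(S^2(T^*M))$.

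Next I would identify the indeterminacy. Two sources of equivalence must be quotiented out. The movement of the embedding $i_t$ can be absorbed: differentiating $i_t$ gives a vector field along $i(\Sigma)$, which extends to a vector field $Z$ on $M$, and pulling back $g_t$ by the flow of (an extension of) $-\dot i_0$ changes the plot to one with a stationary embedding at the cost of adding $\calL_Z g$ to $\dot g_0$. Because the embedding is fixed to $i$ only up to the isometry equivalence in Definition \ref{df:universe}, reparametrizing the fibres $M_t$ by a $t$-dependent family of diffeomorphisms $\Psi_t$ of $M$ with $\Psi_0 = \mathrm{id}$ and $\Psi_t \circ i_t = i$ replaces $\dot g_0$ by $\dot g_0 + \calL_Z g$ where now $Z$ is \emph{any} vector field on $M$ whose restriction to $i(\Sigma)$ equals the prescribed one; varying over all such reparametrizations, the ambiguity in $Z$ is exactly the vector fields vanishing on $i(\Sigma)$. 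Hence the well-defined invariant of the plot is the class of $\dot g_0$ modulo the image of $\calL g\colon \calX_{i(\Sigma)} M \to \Gamma(S^2(T^*M))$, $Z \mapsto \calL_Z g$. This shows the map to the stated quotient is well defined and that it is additive on plots, so the tangent cone injects into the quotient, and in particular is a vector space.

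For surjectivity, given any $h \in \Gamma(S^2(T^*M))$ I would exhibit the plot $g_t := g + t h$ (defined for small $t$ on a fixed neighborhood of $i(\Sigma)$ where this stays lorentzian with the right coorientation), together with the constant embedding $i_t \equiv i$ and $M_{\tot} = (\text{that neighborhood}) \times (-\epsilon,\epsilon)$; this satisfies clauses (1)–(4), so it is a smooth plot, and its derivative is $h$. Thus every class is realized, giving the identification. The main obstacle I anticipate is bookkeeping around the ``varying domain'' aspect: making precise that the tangent cone is computed from germs of metrics along $i(\Sigma)$ rather than genuine global sections (so that shrinking $M_t$ does not lose information), and checking carefully that the isometry equivalence in Definition \ref{df:universe} — not merely the freedom to move $i_t$ — is what upgrades the quotient from ``vector fields vanishing on $i(\Sigma)$ \emph{together with} the tangential part'' down to exactly the vector fields vanishing on $i(\Sigma)$. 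A secondary point requiring care is verifying that the construction is genuinely a quotient of a vector space (so the tangent cone is a vector space, as asserted) rather than only a cone; this follows once additivity of the plot-to-$\dot g_0$ assignment and realizability of sums are both in hand, which the affine plots $g + th$ make transparent.
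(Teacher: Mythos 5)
Your proposal is correct and follows essentially the same route as the paper's proof: represent the tangent vector by a plot with a fixed ambient manifold, absorb the motion of the embedding by pulling back along the flow of an extension of $\dot i_0$ (using the Appendix~A observation that tangent vectors are insensitive to shrinking domains), and observe that the residual ambiguity is exactly $\calL_Z g$ for $Z$ vanishing on $i(\Sigma)$. Your explicit surjectivity check via the affine plots $g+th$ and the resulting verification of the vector space structure are points the paper leaves implicit, but the argument is the same.
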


\begin{proof}
A tangent vector to $\calu\Sigma$ is represented by a 1-parameter
family $(M_s,g_s)\stackrel{i_s}{\hookleftarrow}\Sigma$ of embeddings, where 
$M_0=M$, $M_s$ are open subsets of
a fixed manifold $\overline{M}$, and $g_s$ are lorentzian metrics depending
smoothly upon $s$.   

We will simplify the representative by fixing  $M_s$ and the embeddings,
so that it is only $g$ which varies.  To do this, let $\xi_s$ be
the vector field  $di_s/ds=\xi_s$
along $i_s$.  Since $i_s$ is a proper embedding for each $s$,
we may ``extend'' the family $\xi_s$ over $\overline{M}$, i.e. we may choose a
smooth family $\sigma_s$ of vector fields on $\overline{M}$ such that
$\xi_s=\sigma_s\circ i_s$ for all $s$.  This family may not be
complete, but we may integrate it as far as is possible.  The result
is a family of open sets $M_s' \subset \overline{M}$ and a family of open embeddings
$M_s\stackrel{\psi_s}{\from}M_s'$ defining a smooth path in
$\cinf(\overline{M},[\overline{M}])_{\rm{open}}$ such that $M_0' =
M_0$, $\psi_0$ is the identity 
on $M_0$, and $d\psi_s/ds = \sigma_s \circ \psi_s$.  Since
$\xi_s=\sigma_s\circ i_s$, 
$\psi_s\circ i_0 = i_s$.  (In particular, $M_s'$ contains
$i_0(\Sigma)$ for all $s$.)
We obtain a new family 
 $(M_s',g_s')\stackrel{i_s'}{\hookleftarrow} \Sigma$
of embeddings by setting $g_s' = \psi_s^*(g_s)$ and $i_s'=i_s \circ
\psi_s^{-1}= i_0$.  This family is not quite equivalent to the
original one, since the domains $M_s'$ are not mapped by $\psi_s$ to
all of $M_s$, but they do agree at $s=0$.  According to the observation
in Appendix A that 
tangent
vectors are insensitive to variations of domain, this insures that the
paths in $\calu\Sigma$
represented by $(M_s,g_s)\stackrel{i_s}{\hookleftarrow} \Sigma$  and
$(M_s',g_s')\stackrel{i_s'}{\hookleftarrow} \Sigma$ represent the same tangent vector
to $\calu\Sigma$ at $[(M,g)\stackrel{i_0}{\hookleftarrow} \Sigma]$,
where the latter path has constant
embedding $i_0$.  We may keep $M_s'$ constant as well, in which case the only
object varying with $s$ is the lorentzian metric $g_s$.   Its
derivative with respect to $s$ at $s=0$ is the required smooth section
of $S^2(T^*M)$.  

Our representative of the tangent vector is not unique, since we may
apply an arbitrary family of diffeomorphisms of $M$ which fix $i(\Sigma)$ and
the normal orientation there.  (This freedom corresponds, essentially,
to the freedom which we had in extending  $\xi_s$ to 
$\sigma_s$.)   The only such diffeomorphism which can preserve a
lorentzian metric on (connected) $M$ is the identity.  The
infinitesimal action of these diffeomorphisms, i.e. the map
$\calL _\xi g \mapsfrom \xi $, 
is therefore injective, and the tangent space to $\calu\Sigma$ at
$[i]$ is the quotient of $S^2(T^*M)$ by the image of that map.  
\end{proof}

\subsection{$\Sigma$-evolutions}
\label{sec-evolutions}

The relative positions of
pairs of $\Sigma$-spaces in the same spacetime, up to equivalence,
 form a groupoid over $\calu\Sigma$ which will be our fundamental
 symmetry structure.

\begin{dfn}
A $\Sigma$-{\bf evolution} is an equivalence class of pairs 
$(i_1,i_0)$ of $\Sigma$-spaces in the same
spacetime, where a pair 
$(i_1,i_0)$ in $M$
is equivalent to $(i_1 ',i_0 ')$ in $M'$ if there is
a single isometry $M' \stackrel{\psi}{\leftarrow} M $ which is consistent 
with the coorientations and which satisfies both
$\psi \circ i_1 = i_1'$ and $\psi \circ i_0 = i_0'$.  
We will denote the set of all
$\Sigma$-evolutions by $\cale \Sigma$.  
\end{dfn}

The $\Sigma$-evolutions 
form a diffeological groupoid over the $\Sigma$-universes
with the diffeology on $\cale\Sigma$, like that on $\calu\Sigma$,
defined in terms of representatives.  The groupoid structure has as
target and source
the projections $[i_1,i_0 ] \stackrel{l}{\leftarrow} [i_1]$ 
and $[i_1,i_0 ] \stackrel{r}{\leftarrow} [i_0]$ 
(square brackets denoting equivalence classes); the composition law
is $[i_2,i_1 ][i_1,i_0 ]=[i_2,i_0 ]$; and the inversion rule
$[i_1,i_0]^{-1} = [i_0,i_1]$.

We will show in Section \ref{subsec-stack}
that $\cale\Sigma$ is Morita equivalent to the isometry groupoid
$\cali\Sigma$ of $\Sigma$-adapted spacetimes. This implies that the
orbits of $\cale\Sigma$ are in bijection with the orbits of
$\cali\Sigma$, which are the isometry classes of $\Sigma$-adapted
spacetimes.

The isotropy group of $[i]$ consists of all pairs
$([i_1,i_0])$ such that $[i_1]=[i_0]=[i]$.  For such a pair,
$M \stackrel {i_1}{\hookleftarrow}\Sigma$
and 
$M \stackrel {i_0}{\hookleftarrow}\Sigma$
are
equivalent, which means that there is an isometry $\psi$ from $M$ to itself
such that $i_1=\psi \circ i_0$.  Such an isometry is unique, if it
exists, which implies that the isotropy group of $[i]$ is isomorphic
to the isometry group of its target spacetime.

\begin{rmk}
Each $(i_1,i_0)$ corresponds to a diffeomorphism $i_1 \circ
i_0^{-1}$ between $\Sigma$-spaces in the same spacetime, and
composition in $\cale\Sigma$ corresponds to composition of
diffeomorphisms.  
\end{rmk}

Elements of the Lie
algebroid of $\cale\Sigma$ are infinitesimal $\Sigma$-evolutions and may be
parametrized by triples consisting of $\Sigma$-universes, ``shift'' vector
fields on $\Sigma$, and ``lapse'' functions on $\Sigma$, per the
following proposition.

\begin{prop}
The Lie algebroid $A\cale\Sigma\rightarrow\calu\Sigma$ 
is isomorphic as a vector bundle to the
trivial bundle $\calu\Sigma \times (\calX\Sigma \oplus \calF\Sigma)$.
\end{prop}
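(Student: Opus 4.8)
The plan is to compute the fibre of $A\cale\Sigma$ over a $\Sigma$-universe $[i]$ explicitly by working in the gaussian representative, where the source data is completely rigid. Recall that the fibre $A\cale\Sigma|_{[i]}$ is, by definition, the tangent space at the identity arrow $[i,i]$ to the source fibre $r^{-1}([i])$, so I would represent an element of it by a smooth path $s\mapsto[i_1^s,i_0^s]$ of $\Sigma$-evolutions with $[i_1^0,i_0^0]=[i,i]$ and $[i_0^s]=[i]$ for every $s$. Using that each $\Sigma$-universe has a \emph{unique} gaussian representative $(\calo,g)$ --- a germ near $\Sigma\times\{0\}$ of a metric $g=-dt^2+\gamma_t$ on $\Sigma\times\reals$ with the zero section $i_0\colon x\mapsto(x,0)$ as its $\Sigma$-space --- the condition $[i_0^s]=[i]$ lets me choose representatives in which the spacetime is literally $(\calo,g)$, \emph{constant} in $s$, the source is the constant embedding $i_0^s=i_0$, and only the target $i_1^s$ varies (with $i_1^0=i_0$). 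For $s$ near $0$ the image $i_1^s(\Sigma)$ stays inside the gaussian chart, so the size mismatch emphasized in the introduction plays no role at the infinitesimal level, and the tangent vector is simply recorded by $\xi:=\tfrac{d}{ds}\big|_{s=0}i_1^s\in\Gamma(i_0^*T\calo)$.

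Next I would show that this assignment is a bijection $A\cale\Sigma|_{[i]}\cong\Gamma(i_0^*T\calo)$. For injectivity, two such paths induce the same path in $\cale\Sigma$ only if they are intertwined by a smooth family $\psi_s$ of isometries of open subsets of $(\calo,g)$ fixing $i_0$ with $\psi_0=\mathrm{id}$; the generator $Z$ of $\psi_s$ is then a Killing field vanishing on $i_0(\Sigma)$, and because the unit normal $\partial_t$ is timelike the Killing equation forces the full $1$-jet of $Z$ along $\Sigma$ to vanish, hence $Z\equiv0$ near $\Sigma$ --- this is the infinitesimal counterpart of the isometry rigidity already invoked for $\Sigma$-universes and $\Sigma$-evolutions. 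For surjectivity, given $\xi$ I would extend it to a vector field on $\calo$ and flow $i_0$ along it; the space-like condition being open, for small $s$ this produces space-like embeddings $i_1^s$, hence a path in $r^{-1}([i])$ realizing $\xi$, with the completeness-of-flow and properness technicalities handled as in the proof of Proposition~\ref{prop:tangentuniverses}.

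Finally, I would trivialize: on $\Sigma\times\{0\}$ the tangent bundle of the gaussian spacetime splits canonically as $T\Sigma\oplus\reals\,\partial_t$, using only the product structure $\Sigma\times\reals$ and the coordinate $t$ and \emph{not} the metric $\gamma_t$, so pulling back along $i_0$ gives $\Gamma(i_0^*T\calo)=\calX\Sigma\oplus\calF\Sigma$ with the $T\Sigma$-component the shift and the $\partial_t$-coefficient the lapse. Since this identification is manifestly the same for every $[i]$, assembling it over $\calu\Sigma$ gives the desired $A\cale\Sigma\cong\calu\Sigma\times(\calX\Sigma\oplus\calF\Sigma)$. It then remains to check smoothness of this isomorphism and of its inverse in the diffeological sense, which is routine: a plot of the target yields a smooth family of gaussian representatives (smooth dependence of normal coordinates on parameters) together with smooth families of shifts and lapses, and the extend-and-flow construction turns this into a plot of $\cale\Sigma$ near the identity section whose $s$-derivative is the corresponding plot of $A\cale\Sigma$.

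The hard part will be the middle step: making sure that passing to the gaussian representative really does exhaust all the gauge freedom, so that an infinitesimal $\Sigma$-evolution with fixed source reduces to an infinitesimal motion of the target embedding alone, and that this reduction is faithful --- which is precisely where the rigidity statement ``a Killing field vanishing on a space-like hypersurface of a connected Lorentz manifold vanishes identically'' is used. Once this is secured, the proposition is essentially bookkeeping with the product structure of $\Sigma\times\reals$ and with the functional diffeologies of Appendix~A.
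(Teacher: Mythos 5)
Your proposal is correct and follows essentially the same route as the paper's proof: fix the source embedding and the target spacetime, identify a tangent vector to the $r$-fibre at the unit $[i_0,i_0]$ with a vector field along $i_0$, and split it into tangential (shift) and normal (lapse) components to land in $\calX\Sigma\oplus\calF\Sigma$. The only differences are that you pass to the gaussian representative first (where the metric splitting coincides with the product splitting $T\Sigma\oplus\reals\,\partial_t$, whereas the paper splits directly with the lorentzian metric and the unit normal $\mathbf{n}$ in an arbitrary representative), and that you spell out the injectivity/surjectivity and Killing-field rigidity details that the paper leaves implicit.
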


\begin{proof}
An element of $A\cale\Sigma$ at the base point
$[i_0] \in \calu\Sigma$ is a tangent vector to a smooth
path in $\cale\Sigma$ whose image is contained 
in the $r$-fibre of $[i_0]$ starting
at the unit $[i_0,i_0]$.   Such a path is represented by
 a smooth family $(i_s,i_0)$
defined on an interval containing $s=0$, and its tangent
vector at $s=0$ corresponds to a vector field along
$i_0$.   (Note that the $i_s$ may all be chosen to have the same
target; the Lie algebroid fibres are thus simpler to analyze than the
tangent spaces to $\calu\Sigma$ to which the anchor  projects them.)
 Using the lorentzian metric on the target manifold to 
decompose this vector field into its tangential and normal
components, and dividing the latter by the unit future normal field
$\mathbf n$, we obtain a vector field $X$ and smooth function $\phi$ on
$\Sigma$ which correspond to the Lie algebroid element given by the
path.   

Each fibre of $A\cale\Sigma$ is now identified with the fixed space
$\calX\Sigma \oplus \calF\Sigma$.   We omit here the verification that
this identification depends smoothly on the base point in $\calu\Sigma$.   
\end{proof}

\subsection{Gaussian normal form}
To compute the bracket and anchor of the Lie algebroid
$A\cale\Sigma\rightarrow\calu\Sigma$, we may work in a
neighborhood of the units $[i,i]$.  There, we may 
use the simplified representation of each $\Sigma$-universe $[i]$ near
$S=i(\Sigma)$ given by the following
gaussian normal form.
Using the metric $g$ on the target  of the cooriented
embedding $ M \stackrel{i}{\hookleftarrow} \Sigma$, we first 
extend the  unit future normal
field along
$S$ by parallel translation along the geodesics normal to $S$
 to obtain a time-like vector field $\mathbf{n}$ on a neighborhood $U$
of $S$ in $M$; this extension will satisfy the equations
$g(\mathbf{n}, TS) = 0$, $g(\mathbf{n},
 \mathbf{n}) = -1$, and $\nabla_\mathbf{n} \mathbf{n} = 0$, where
 $\nabla$ is the Levi-Civita connection of $g$. Transporting $S$
 along the flow $\Phi^{\mathbf{n}}_s$ of $\mathbf{n}$ produces a
 codimension-1 foliation on a neighborhood of $S$
with space-like leaves which are everywhere
 orthogonal to $\mathbf{n}$. We call the leaves of this 
 foliation the {\bf gaussian time slices}. 
This construction induces a canonical isometry to a neighborhood of
$S$ in $M$ from a neighborhood of $\Sigma \times \{0\}$ in $\Sigma \times \bbR$ on
which the metric has the
gaussian
 form
\begin{equation}
\label{eq-gaussian}
  \frac{1}{2}( \gamma_{ij} (x,t) dx^i dx^j - dt^2) \,,
\end{equation}
where $x^i$ are coordinates on $\Sigma$. 
Replacing the former neighborhood by the latter, we have established the following
normal form result.

\begin{prop}
  Every $\Sigma$-universe has a representative 
$M  \stackrel{i}{\hookleftarrow} \Sigma$ in which 
a neighborhood $U$ of $i(\Sigma)$ in $M$ is equal to a neighborhood of 
$\Sigma\times \{0\}$ in  $\Sigma\times \reals$, $i(x)=(x,0)$, and the
metric on $U$ has the gaussian form \eqref{eq-gaussian}.  This gaussian metric is uniquely
determined by $[i]$.  
\end{prop}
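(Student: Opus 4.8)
The plan is to establish the two assertions of the proposition in turn: first that each $\Sigma$-universe admits a representative in gaussian normal form, and second that this representative is unique. Both follow from the standard theory of gaussian (or \emph{normal}) coordinates along a spacelike hypersurface, which the preceding paragraph has already sketched; the task here is mainly to package that construction as a statement about equivalence classes and to check that the ambiguities in the construction are exactly the ones quotiented out in Definition \ref{df:universe}.

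\textbf{Existence.} Start from any representative $M \stackrel{i}{\hookleftarrow}\Sigma$ of the given $\Sigma$-universe. Along $S=i(\Sigma)$ the coorientation picks out a unit future-pointing normal; extend it by parallel transport along the normal geodesics to a vector field $\mathbf{n}$ on a neighborhood $U$ of $S$, as described above, so that $\nabla_{\mathbf n}\mathbf n=0$, $g(\mathbf n,\mathbf n)=-1$, and $\mathbf n\perp TS$ everywhere on $U$ (the last two being consequences of the first together with the Gauss lemma). The normal exponential map $(x,t)\mapsto \exp_{i(x)}(t\,\mathbf n_{i(x)})$ is a diffeomorphism from a neighborhood of $\Sigma\times\{0\}$ in $\Sigma\times\reals$ onto a possibly smaller neighborhood of $S$; shrinking $U$, we may take it to be $U$ itself. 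Pulling $g$ back along this diffeomorphism, the $\del_t$-integral curves are unit-speed timelike geodesics orthogonal to every slice $\Sigma\times\{t\}$, which forces the pulled-back metric to have no $dx^i\,dt$ cross terms and $g(\del_t,\del_t)=-1$; hence it takes the form \eqref{eq-gaussian} for a $t$-dependent family $\gamma_{ij}(x,t)$ of riemannian metrics on $\Sigma$. (The factor $\tfrac12$ is a harmless normalization.) Replacing $U$ by this model neighborhood and $i$ by $x\mapsto(x,0)$ yields an equivalent $\Sigma$-spacetime of the required form, using the clause from the diffeology/Definition that tangent vectors and indeed the equivalence class are insensitive to shrinking the ambient open set around $i(\Sigma)$.

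\textbf{Uniqueness.} Suppose $M\stackrel{i}{\hookleftarrow}\Sigma$ and $M'\stackrel{i'}{\hookleftarrow}\Sigma$ are two gaussian-form representatives of the same $\Sigma$-universe, so there is a coorientation-preserving isometry $\psi\colon M\to M'$ with $\psi\circ i=i'$. Because $\psi$ is an isometry it sends normal geodesics of $S=i(\Sigma)$ to normal geodesics of $S'=i'(\Sigma)$, sends the unit future normal of $S$ to that of $S'$ (here the coorientation-compatibility is used to fix the sign), and therefore intertwines the two normal exponential maps. In the model coordinates this says $\psi(x,t)=(x,t)$ on the overlap of the two neighborhoods, and pulling back the metric then gives $\gamma'_{ij}(x,t)=\gamma_{ij}(x,t)$ there. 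Thus the germ along $\Sigma\times\{0\}$ of the gaussian metric \eqref{eq-gaussian} is a well-defined invariant of $[i]$, which is the claimed uniqueness.

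\textbf{Main obstacle.} The conceptually routine parts are the coordinate computation and the sign bookkeeping for the coorientation; the one point that needs care is the interplay between \emph{germ-level} uniqueness and the fact that different representatives come with \emph{different} neighborhoods $U$, $U'$. I would handle this by being explicit that the proposition asserts uniqueness of the metric as a germ along $\Sigma\times\{0\}$ (equivalently, on a sufficiently small common neighborhood), and invoking the Appendix~A principle that the $\Sigma$-universe does not see the size of $U$ — so that "uniquely determined by $[i]$" is to be read in that germ sense. A secondary point worth a sentence is that the normal exponential map is only a local diffeomorphism a priori; properness of the embedding $i$ lets us choose a uniform collar, but for the germ statement even a pointwise-in-$\Sigma$ shrinking suffices, so no global input beyond what is already assumed is needed.
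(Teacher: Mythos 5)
Your proof is correct and follows essentially the same route as the paper, which establishes the proposition by the standard gaussian normal coordinate construction (parallel transport of the unit future normal along normal geodesics, the induced foliation by orthogonal slices, and the resulting identification with a neighborhood of $\Sigma\times\{0\}$) in the paragraph immediately preceding the statement. Your explicit uniqueness argument and the remark that "uniquely determined" should be read at the level of germs along $\Sigma\times\{0\}$ make precise points the paper leaves implicit, and are consistent with its intent.
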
 

Similarly, any tangent vector to $\calu\Sigma$ has a
unique representation in the form 
\begin{equation}
  \frac{1}{2} \alpha_{ij} (x,t) dx^i dx^j  \,.
\end{equation}

When $\Sigma$ is compact (and sometimes when it is not), one can
take $U$ to be a product $\Sigma \times I$ for some open
interval $I$ containing zero.  We will
call such a $\Sigma$-universe \emph{cylindrical}.  
The ``spatial'' component  $\frac{1}{2} \gamma_{ij} (x,t) dx^i dx^j$
of the gaussian metric is then a 1-parameter
family of riemannian metrics on $\Sigma$.
Thus
we may parametrize the cylindrical $\Sigma$-universes by paths of metrics, and
the tangent vectors to them by 
paths of symmetric covariant 2-tensors (not necessarily positive
definite) on $\Sigma$.    

For noncompact $\Sigma$, we may have 
to take $U$ to consist of pairs $(x,t)$ with 
$|t| < \epsilon(x)$ for a smooth positive function $\epsilon$ on $\Sigma$.
In any case, on a neighborhood of any compact subset of
 $\Sigma$, a
$\Sigma$-universe is defined by a
path of metrics and a tangent vector to the $\Sigma$-universes by a path of
symmetric covariant 2-tensors.

\subsection{Gaussian vector fields}
To use the gaussian normal form in our computations, we must 
deal with the fact that a
slicing which is gaussian for one embedding of $\Sigma$ is not
gaussian for most others.   The following lemma about diffeomorphisms will lead us to its
infinitesimal version about vector fields, which is all we need to use.

\begin{lemma}
\label{obs:gaussextend1}
Every diffeomorphism $S \rightarrow S'$ between space-like,
cooriented
hypersurfaces in spacetimes $(M,g)$ and $(M',g')$ 
extends to a diffeomorphism
$\psi: U \rightarrow U'$ between neighborhoods 
of $S$ and $S'$ respectively which respects the gaussian time-splittings,
i.e. which intertwines the (local) gaussian time flows: 
\begin{equation}
\label{eq:diffgauss}
  \psi \circ \Phi_t^{\textbf{n}} = \Phi_t^{\textbf{n}'} \circ \psi \,.
\end{equation}
The diffeomorphism is unique up to the choice of (connected relative
to $S$) $U$.
\end{lemma}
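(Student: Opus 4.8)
The plan is to construct $\psi$ by ``transporting along gaussian time flows''. Starting from the given diffeomorphism $f\colon S\to S'$, define $\psi$ on a neighborhood of $S$ by the formula $\psi(\Phi^{\mathbf n}_t(p)) = \Phi^{\mathbf n'}_t(f(p))$ for $p\in S$ and $t$ small enough. Concretely, in the gaussian normal forms of Proposition-level statements above, both $U$ and $U'$ are identified with neighborhoods of $\Sigma\times\{0\}$ in $\Sigma\times\reals$ carrying metrics of the form \eqref{eq-gaussian}, the hypersurfaces are $\Sigma\times\{0\}$, the gaussian time flows are just $(x,t)\mapsto(x,t+s)$, and a diffeomorphism $f\colon S\to S'$ becomes a diffeomorphism $\varphi\colon\Sigma\to\Sigma$. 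So the candidate is simply $\psi(x,t) = (\varphi(x),t)$, restricted to the open set where this makes sense. This is manifestly smooth, a local diffeomorphism (its inverse is $(y,t)\mapsto(\varphi^{-1}(y),t)$), and by construction satisfies \eqref{eq:diffgauss} and $\psi|_S = f$.

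First I would set up the two gaussian normal forms, invoking the normal-form proposition to replace $(M,g)$ near $S$ and $(M',g')$ near $S'$ by product neighborhoods in $\Sigma\times\reals$; this reduces the problem to the coordinate statement just described. Next I would verify that the domain is an honest neighborhood of $S$: the set $\{(x,t) : (x,t)\in U \text{ and } (\varphi(x),t)\in U'\}$ is open, contains $\Sigma\times\{0\} = S$, and by shrinking we may take it connected relative to $S$ (e.g. the slab $|t|<\epsilon(x)$ for a suitable positive function $\epsilon$, or $\Sigma\times I$ in the cylindrical case). Then the intertwining identity \eqref{eq:diffgauss} and the restriction condition are immediate from the explicit formula. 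Finally, for uniqueness: if $\psi$ and $\tilde\psi$ both extend $f$ and both intertwine the gaussian flows, then on their common (connected relative to $S$) domain, every point is $\Phi^{\mathbf n}_t(p)$ for some $p\in S$, and $\psi(\Phi^{\mathbf n}_t(p)) = \Phi^{\mathbf n'}_t(f(p)) = \tilde\psi(\Phi^{\mathbf n}_t(p))$; hence $\psi = \tilde\psi$ wherever both are defined, which is exactly uniqueness up to the choice of $U$.

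I expect the main subtlety — rather than a deep obstacle — to be bookkeeping about domains, since for noncompact $\Sigma$ the gaussian neighborhoods need not be products and the flows $\Phi^{\mathbf n}_t$ are only locally defined; one must phrase everything in terms of the open set $M_{\tot}$-style total space of pairs $(x,t)$ with $|t|$ small depending on $x$, and check that pulling back along $\varphi$ in the $\Sigma$-variable keeps this open and keeps it a neighborhood of $S$. A secondary point worth a sentence is that the extension $\psi$ is \emph{not} claimed to be an isometry — only to respect the time-splittings — so no PDE or curvature computation is needed; the content is purely that the gaussian flow gives a canonical ``collar'' coordinate in which any hypersurface diffeomorphism extends by the identity in the normal direction. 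This makes the lemma essentially a reformulation of the uniqueness of gaussian normal coordinates, and the proof is correspondingly short.
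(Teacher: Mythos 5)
Your construction is correct and is precisely the argument the paper has in mind: the lemma is stated there without proof, immediately after the gaussian normal form is established, and the intended justification is exactly your observation that in gaussian coordinates the flow of $\mathbf{n}$ is translation in $t$, so the product extension $\psi(x,t)=(\varphi(x),t)$ intertwines the two flows and restricts to the given map on $S$. Your domain bookkeeping and your uniqueness argument (that $\psi$ is forced along every normal-geodesic flow segment through $S$ contained in $U$, which is what ``connected relative to $S$'' is meant to license) fill in the only points the paper leaves implicit.
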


We note that (\ref{eq:diffgauss}) holds if and only if $\psi$
preserves inner products with the unit normal, i.e. $g(\mathbf{n},w) =
(\psi^*g')(\mathbf{n},w)$ for all vector fields $w$.  By letting
$(M,g)=(M',g')$ and defining
$\psi$ in the domain of a flow $\Phi^v_s$ 
generated by some vector field $v$ on $M$, and
differentiating with respect to $s$, we obtain the following
infinitesimal version of Eq.~(\ref{eq:diffgauss}). 

\begin{dfn}
  Let $U$ be a neighborhood of a hypersurface
$S$ as in Lemma \ref{obs:gaussextend1}.
A vector field $v$ on $U$ is called
  $g$-\emph{gaussian} if it satisfies 
\begin{equation}
\label{eq:vecgauss}
  (\Lie_v g)(\mathbf{n}, w) = 0
\end{equation}
for all vector fields $w$.
\end{dfn}

The following 
infinitesimal version of Lemma \ref{obs:gaussextend1} will be proven by
a purely infinitesimal computation.

\begin{prop}
\label{obs:gaussextend2}
Every vector field $v_0$ with values in $TM$ defined on a hypersurface
$S$ as in Lemma \ref{obs:gaussextend1}
 extends to a $g$-gaussian vector
field $v$ defined on a neighborhood of $S$.
\end{prop}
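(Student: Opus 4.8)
The plan is to set up the gaussian condition \eqref{eq:vecgauss} as a first-order linear PDE for the "vertical part" of $v$ along the normal geodesics, and then solve it by integrating along those geodesics starting from the prescribed data $v_0$ on $S$. Concretely, work in the gaussian normal form \eqref{eq-gaussian} provided by the previous proposition, so that near $S$ we have coordinates $(x^i,t)$ on $\Sigma\times\reals$ with $\mathbf n = \partial_t$, $g(\mathbf n,\mathbf n) = -1$, $g(\mathbf n,\partial_i) = 0$, and $\nabla_{\mathbf n}\mathbf n = 0$. Write $v = v^t\,\partial_t + v^i\,\partial_i$. The task is to prescribe $v|_S$ arbitrarily and extend so that $(\Lie_v g)(\mathbf n,w) = 0$ for all $w$, i.e. for $w = \partial_t$ and for $w = \partial_j$.

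First I would expand $(\Lie_v g)(\mathbf n, w) = v\bigl(g(\mathbf n,w)\bigr) - g([v,\mathbf n],w) - g(\mathbf n,[v,w])$, or equivalently use $(\Lie_v g)(X,Y) = g(\nabla_X v,Y) + g(\nabla_Y v, X)$. Taking $X = Y = \mathbf n$ gives $2g(\nabla_{\mathbf n} v,\mathbf n) = 0$; since $g(\mathbf n,\mathbf n) = -1$ is constant and $\nabla_{\mathbf n}\mathbf n = 0$, this reads $\partial_t\bigl(g(v,\mathbf n)\bigr) = 0$, i.e. $\partial_t v^t = 0$ (using $g(v,\mathbf n) = -v^t$). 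So $v^t$ must be constant along each normal geodesic: $v^t(x,t) = v^t(x,0)$, determined by the normal component of $v_0$. Next, taking $X = \mathbf n$, $Y = \partial_j$ gives $g(\nabla_{\mathbf n} v,\partial_j) + g(\nabla_{\partial_j} v,\mathbf n) = 0$. The first term, using that $\partial_j$ is orthogonal to $\mathbf n$ and the block form of the metric, works out to $\tfrac12\gamma_{jk}\,\partial_t v^k$ plus terms linear in $v$ with coefficients built from $\gamma$ and its derivatives; the second term is $-\partial_j v^t$ plus similar lower-order terms. Rearranged, this is an equation of the schematic form $\partial_t v^k = \Gamma^k{}_\ell(x,t)\,v^\ell + (\text{known function of }x,t)$, a linear ODE in $t$ for the spatial components $v^k$ at each fixed $x$, with source term involving the already-determined $v^t$ and the prescribed tangential data.

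The construction then goes: set $v^t(x,t) := (v_0)^t(x)$; solve the linear ODE system for $v^k(x,t)$ with initial condition $v^k(x,0) = (v_0)^k(x)$. Standard ODE theory gives a unique smooth solution on a neighborhood of $S$ (the neighborhood may shrink in the $x$-direction, but that is harmless, exactly as in the gaussian-normal-form discussion). Smooth dependence on $x$ of the solution follows from smooth dependence of ODE solutions on parameters. This produces a vector field $v$ on a neighborhood $U$ of $S$ satisfying \eqref{eq:vecgauss}, extending $v_0$. Uniqueness up to the choice of connected $U$ is immediate from uniqueness of solutions to the ODE. I would remark that this is precisely the infinitesimal shadow of Lemma~\ref{obs:gaussextend1}: exponentiating $v$ recovers the diffeomorphism $\psi$ there.

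**The main obstacle** is purely computational bookkeeping: correctly expanding $(\Lie_v g)(\mathbf n,\partial_j)$ in the gaussian coordinates and verifying that it really has the claimed form — first order in $\partial_t$ applied to the $v^k$, with no $\partial_t$ hitting $v^t$ in a way that would overdetermine the system — so that the two families of equations (one for $v^t$, the $t$-component; one system for the $v^k$, the spatial components) are consistent and together constitute a well-posed ODE initial-value problem along the normal geodesics rather than an overdetermined PDE. Once one sees that $g(\mathbf n,\mathbf n)$ and $g(\mathbf n,\partial_i)$ being constant (the defining properties of the gaussian frame) kill exactly the terms that would cause trouble, the rest is routine.
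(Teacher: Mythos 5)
Your argument is correct and follows essentially the same route as the paper: both reduce the gaussian condition \eqref{eq:vecgauss} to the pair of equations $\partial_t v^t=0$ and a first-order linear ODE in $t$ for the spatial components along the normal geodesics, solved from the prescribed data on $S$. The only cosmetic difference is that the paper's invariant computation (via $i_{\mathbf n}\Lie_v g = d(i_v i_{\mathbf n}g)+i_{[\mathbf n,v]}g$) shows that the lower-order terms you carry schematically in fact cancel, so the tangential equation is the explicit integration $\partial_t X=\gradgam\phi$ rather than a genuine linear system --- but your ODE existence/uniqueness argument is valid either way.
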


\begin{proof}
 Condition~(\ref{eq:vecgauss}) can be rewritten as
\begin{equation*}
\begin{split}
  0 
  &= i_\mathbf{n} \Lie_v g 
  = (\Lie_v i_\mathbf{n} + i_{[\mathbf{n},v]} ) g
  = (d i_v + i_v d)i_\mathbf{n} g + i_{[\mathbf{n},v]} g \\
  &= d (i_v i_\mathbf{n} g) + i_{[\mathbf{n},v]} g \,, 
\end{split}
\end{equation*}
where we have used the fact that $i_{\mathbf{n}}g = - dt$ so that
$di_{\mathbf{n}}g =0$. Equivalently, 
\begin{equation}
\label{eq:GaussVec}
  [\mathbf{n},v] = - \gradg(g(\mathbf{n},v)) \,,
\end{equation}
where $\gradg$ is the gradient with respect to $g$.

Splitting the vector field $v = X + \phi \mathbf{n}$ into components
orthogonal and parallel to $\mathbf{n}$ and observing that
$[\mathbf{n},v] = \nabla_\mathbf{n} v - \nabla_v \mathbf{n} =
\nabla_\mathbf{n} v - \nabla_X \mathbf{n}$ because $\nabla_\mathbf{n}
\mathbf{n} = 0$, and that $g(\mathbf{n},v) = -\phi$ because
$g(\mathbf{n},\mathbf{n}) = -1$, we obtain
the equivalent condition  
\begin{equation*} 
\nabla_\mathbf{n} v = \nabla_X \mathbf{n} + \gradg \phi \,.
\end{equation*}
If we now take the inner product with $\mathbf{n}$, use the
facts that $g(\nabla_\mathbf{n} X, \mathbf{n}) = 0$ and
$g(\nabla_\mathbf{n} \phi \mathbf{n}, \mathbf{n}) = - \mathbf{n}\cdot
\phi$, then we obtain the condition $\mathbf{n}\cdot \phi = 0$. In in
other words, the gradient $\gradg\phi$ does not have a normal
component, i.e. 
$ \gradg\phi = \gradgam \phi$, where the ``spatial gradient''
 $\gradgam \phi$ is defined as $\gradg \phi + (\mathbf{n} \cdot
 \phi) \mathbf{n}.$  (Note the sign again.)
 
This implies that that Eq.~(\ref{eq:GaussVec}) splits into components
orthogonal and parallel to $\mathbf{n}$ as 
\begin{equation}
\label{eq:bracketsplit}
  [\mathbf{n} , X ] = \gradgam\phi
  \,,\qquad
  \mathbf{n} \cdot \phi = 0 \,.
\end{equation}
In local coordinates these equations 
read 
\begin{align}\label{gaussianextensionX}
\frac{\partial X}{\partial t} &= \gradgam\phi \\ 
\label{gaussianextensionphi}
\frac{\partial\phi}{\partial t} &= 0.
\end{align} 
 Using the boundary
conditions $X_{t=0} = X_0$ and $\phi_{t=0} = \phi_0$, we see that 
the $g$-gaussian extension $v =
X + \phi \mathbf{n}$ of the vector field $v_0 =
X_0 + \phi_0 \mathbf{n}$ exists and is uniquely determined in a very
simple way by the initial values of $X$ and $\phi$. 
\end{proof}
 
We will denote the $g$-gaussian extension of $X_0 + \phi_0 \mathbf{n}$ by
$G_g(X_0,\phi_0)$.  Furthermore, we will abuse notation by omitting
the zero subscripts for the initial values when the context
distinguishes them from their extensions.

For future reference, we write below an explicit formula for the
gaussian extension, to first order in $t$.

\begin{equation}
\label{eq:gaussianfirstorder}
G_g(X_0,\phi_0) = X_0 + t \grad_{\gamma_0}\phi_0 + \phi_0 \mathbf{n} + O(t^2).
\end{equation}

\subsection{The action of $g$-gaussian vector fields on symmetric 2-forms}

To compute the anchor $A\cale\Sigma\rightarrow T\calu\Sigma$  
of our Lie algebroid, we need 
to express, in terms of the space/time splitting on the ambient manifold, 
the Lie derivative of a symmetric 2-form of the type 
\begin{equation*}
  \alpha = \frac{1}{2} \alpha_{ij}(x,t) dx^i dx^j 
\end{equation*}
by a $g$-gaussian vector field 
$v = X + \phi\mathbf{n}.$

The
pull-back of the Lie derivative on the ambient manifold to the
gaussian time slices  is 
\begin{equation*}
  \Lie^\top_v \alpha 
  := \Lie_v \alpha - (i_{\mathbf{n}}  \Lie_v \alpha) dt
  = \Lie_v \alpha - (i_{[\mathbf{n},v]} \alpha) dt \,.
\end{equation*}
If $v = X + \phi \mathbf{n}$ is $g$-gaussian we have $[\mathbf{n},v] =
\gradgam
 \phi$. Writing $\Lie_{\mathbf{n}} \alpha = \Lie^\top_{\mathbf{n}} \alpha = \frac{1}{2} \phi\, \dot{\alpha}_{ij} dx^i dx^j =: \dot{\alpha}$, we obtain
\begin{equation*}
  \Lie_v \alpha = \Lie^\top_X \alpha + \phi\,\dot{\alpha} 
   + (i_{\gradgam \phi} \alpha) dt \,,
\end{equation*}
where $\Lie^\top_X \alpha = \Lie^\top_{X(t)} \alpha(t)$ is the Lie derivative on the time slice at $t$. We will drop the superscript of $\Lie^\top_X$ if it is clear from the context what $\Lie_X$ denotes.

Using the last equation,
 we can compute the action of $v$ on the metric,
\begin{equation*}
\begin{split}
  \Lie_v g 
  &= \Lie_v (\gamma - \tfrac{1}{2}dt^2 ) 
  = \Lie_X \gamma + \phi\,\dot{\gamma} 
   + (i_{\gradgam \phi} \gamma) dt
   - d\phi \, dt \\
  &= \Lie_X \gamma + \phi\,\dot{\gamma} \,.
\end{split}
\end{equation*}
Note that the terms containing derivatives of $\phi$
cancel. Furthermore, we have 
\begin{equation*}
  \Lie_{\mathbf{n}} g = -2K \,,
\end{equation*}
where $K(X,Y) = g(\nabla_X Y, \mathbf{n})$ is the second fundamental
form with respect to $\mathbf{n}$ of the time slice at $t$. 
(See Section 9.3 of \cite{Wald}, but note that Wald's ``extrinsic
curvature'' is the negative of the second fundamental form.)   Thus,
the Lie derivative of the metric with respect to a
$g$-gaussian vector field is given by 
\begin{equation}
\label{eq:gaussact}
  \Lie_{X + \phi \mathbf{n}} g 
  = \Lie_X \gamma + \phi \dot{\gamma} = \Lie_X \gamma - 2\phi K \,.
\end{equation} 

\subsection{The bracket and the anchor}
\label{subsec-bracketanchor}

We can now make explicit the Lie algebroid structure on 
$A\cale\Sigma \to \calu\Sigma$
in terms of the trivialization $A\cale\Sigma \approx 
\calu\Sigma \times (\calX\Sigma \oplus \calF\Sigma)$ 
given by gaussian extension. 

Using (\ref{eq:bracketsplit}), we find the 
Lie bracket of two $g$-gaussian vector
fields to be:
\begin{equation}
\label{eq:Katzbrack}
  [X + \phi\mathbf{n}, Y + \psi\mathbf{n}]
  = [X, Y] + \phi \,\gradgam\psi - \psi \,\gradgam \phi 
   + (X \cdot \psi - Y \cdot \phi) \mathbf{n} \,,
\end{equation}
which corresponds exactly to 
(\ref{brackets}). 

As for any bracket of vector fields, the jacobiator 
$[u,[v,w]] + [v,[w,u]] +[w,[u,v]]$
  of three $g$-gaussian vector fields $u$, $v$, and $w$
  vanishes. However, the bracket of two
  $g$-gaussian vector fields is in general not
  $g$-gaussian, as the following proposition shows.

\begin{prop}
  The bracket (\ref{eq:Katzbrack}) of two $g$-gaussian vector fields is 
  not always $g$-gaussian. 
\end{prop}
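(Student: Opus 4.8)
The plan is to exhibit a single explicit counterexample, since the claim is a negative (``not always''). First I would recall the characterization of $g$-gaussianness: by equation~\eqref{eq:bracketsplit}, a vector field $W = Z + \chi\mathbf{n}$ on a neighborhood of $S$ is $g$-gaussian if and only if $[\mathbf{n},Z] = \gradgam\chi$ and $\mathbf{n}\cdot\chi = 0$. So to show that the bracket $[u,v]$ of two $g$-gaussian fields $u = X+\phi\mathbf{n}$, $v = Y+\psi\mathbf{n}$ fails to be $g$-gaussian, it suffices to read off from \eqref{eq:Katzbrack} its tangential and normal parts, namely $Z = [X,Y] + \phi\gradgam\psi - \psi\gradgam\phi$ and $\chi = X\cdot\psi - Y\cdot\phi$, and then check that at least one of the two conditions $\mathbf{n}\cdot\chi = 0$ or $[\mathbf{n},Z] = \gradgam\chi$ is violated for a suitable choice of metric and initial data.

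The cleanest route is to make the normal condition $\mathbf{n}\cdot\chi = 0$ fail. Since $u$ and $v$ are $g$-gaussian, their normal components $\phi$ and $\psi$ are $t$-independent (that is, $\mathbf{n}\cdot\phi = \mathbf{n}\cdot\psi = 0$), but their tangential components $X$ and $Y$ do depend on $t$: by equation~\eqref{gaussianextensionX}, $\partial_t X = \gradgam \phi$ and $\partial_t Y = \gradgam\psi$, where the spatial gradient is taken with respect to the time-dependent slice metric $\gamma(\cdot,t)$. Hence
\begin{equation*}
\mathbf{n}\cdot\chi = \partial_t\bigl(X\cdot\psi - Y\cdot\phi\bigr) = (\partial_t X)\cdot\psi - (\partial_t Y)\cdot\phi = (\gradgam\phi)\cdot\psi - (\gradgam\psi)\cdot\phi\,,
\end{equation*}
using that $\phi$, $\psi$ are $t$-independent. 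This last expression, $\gamma^{ij}(\partial_i\phi\,\partial_j\psi - \partial_i\psi\,\partial_j\phi) = 0$, vanishes identically by the symmetry of $\gamma^{ij}$ — so the normal obstruction is, perhaps surprisingly, always zero, and we cannot get a counterexample this way.

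Therefore the obstruction must come from the tangential condition $[\mathbf{n},Z] = \gradgam\chi$, i.e. $\partial_t Z = \gradgam\chi$, and this is where the main work lies. Writing $Z = [X,Y] + \phi\gradgam\psi - \psi\gradgam\phi$, I would compute $\partial_t Z$ using $\partial_t X = \gradgam\phi$, $\partial_t Y = \gradgam\psi$, $\partial_t\phi = \partial_t\psi = 0$, and the $t$-dependence of the operator $\gradgam$ through $\dot\gamma = -2\phi K$ (cf.~\eqref{eq:gaussact} with the slice-intrinsic reading, $\Lie_{\mathbf n}\gamma = -2K$). Comparing $\partial_t Z|_{t=0}$ with $\gradgam\chi|_{t=0} = \gradgam(X\cdot\psi - Y\cdot\phi)$, the terms involving only $\gamma$ at $t=0$ and its spatial derivatives should match — these are exactly the ``infinitesimal'' relations that made \eqref{eq:Katzbrack} close up to the $\mathbf{n}$-component — but the derivative of the Christoffel symbols of $\gamma$ in the $t$-direction, governed by $\dot\gamma = -2\phi K$, will produce a residual term proportional to the second fundamental form $K$ and not expressible as $\gradgam$ of anything canonical. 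I expect the discrepancy to be of the schematic form $\bigl(\partial_t(\gradgam) - \gradgam\partial_t\bigr)$ applied to $\psi$ and $\phi$, which is a first-order operator with coefficients built from $\dot\gamma^{ij} = 2\phi\gamma^{ik}\gamma^{jl}K_{kl}$, hence generically nonzero. The concrete counterexample: take $\Sigma = \reals^2$ (or $\reals$), the ambient metric $g = \tfrac12(\gamma_{ij}(x,t)dx^idx^j - dt^2)$ with $\gamma$ chosen so that its slices have nonvanishing $K$ at $t=0$, and $\phi$, $\psi$ generic functions with $\gradgam\psi \ne 0$; then plug into the formula above and verify $\partial_t Z \ne \gradgam\chi$ at a point of $S$. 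The hard part will be organizing the $t$-derivative of the Christoffel/gradient terms cleanly enough that the residual $K$-dependent term is manifestly nonzero for an explicit choice; once that term is isolated, choosing the data to make it nonvanishing is routine.
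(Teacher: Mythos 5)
Your strategy is sound and is a genuinely different route from the paper's. The paper never splits the gaussian condition into the two components of \eqref{eq:bracketsplit}; it computes the single quantity $i_{\mathbf n}\Lie_{[v,w]}g$ directly by Cartan calculus (Jacobi identity on $[\mathbf n,[v,w]]$ plus $[\mathbf n,v]=\gradgam\phi$), lands on
\[
 i_{\mathbf n}\Lie_{[v,w]}g \;=\; i_{\gradgam\phi}\Lie_Y\gamma - i_{\gradgam\psi}\Lie_X\gamma
 + 2\, i_{(\phi\,\gradgam\psi-\psi\,\gradgam\phi)}K \,,
\]
and then sets $X=Y=0$ on the initial slice to isolate the $K$-term. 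Your preliminary observation that the normal condition $\mathbf n\cdot\chi=0$ holds automatically is correct and is consistent with the paper's formula (the right-hand side above is a purely spatial $1$-form). What you buy with your route is concreteness and a clear separation of the two obstructions; what the paper's route buys is that the whole obstruction drops out in three lines of coordinate-free algebra. Two points on the step you defer as ``the hard part'': it is in fact routine, and easier than you anticipate. First, $\gradgam\psi=\gamma^{ij}\partial_j\psi\,\partial_i$ involves only the inverse metric, no Christoffel symbols, so the only $t$-derivative needed is $\partial_t\gamma^{ij}=-\gamma^{ik}\dot\gamma_{kl}\gamma^{lj}=2\gamma^{ik}K_{kl}\gamma^{lj}$; and if you take $X=Y=0$ on the whole initial slice, then $[\gradgam\phi,Y]$, $[X,\gradgam\psi]$, and $\gradgam\chi=\gradgam(X\cdot\psi-Y\cdot\phi)$ all vanish there, leaving exactly $\partial_t Z-\gradgam\chi=\phi\,\partial_t(\gradgam\psi)-\psi\,\partial_t(\gradgam\phi)$, whose $\gamma$-lowering is $2\,i_{(\phi\gradgam\psi-\psi\gradgam\phi)}K$ --- the paper's term. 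Second, fix a factor: $\dot\gamma=\Lie_{\mathbf n}\gamma=-2K$, not $-2\phi K$ (the $\phi$ enters only when one differentiates along $\phi\mathbf n$, as in \eqref{eq:gaussact}); your $\dot\gamma^{ij}=2\phi\gamma^{ik}\gamma^{jl}K_{kl}$ carries a spurious $\phi$ that would produce incorrect $\phi^2$ and $\psi^2$ terms. With those repairs the argument closes: any $\gamma(x,t)$ with $K\neq 0$ at $t=0$ and any $\phi,\psi$ with $\phi\,d\psi-\psi\,d\phi\neq 0$ at a point give a counterexample, exactly as in the paper.
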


\begin{proof}
We have
\begin{equation*}
\begin{split}  
  i_\mathbf{n} \Lie_{[v,w]}g
  &= \Lie_{[v,w]} i_\mathbf{n} g + i_{[\mathbf{n},[v,w]]} g \\
  &= d(g([v,w],\mathbf{n})) 
      + i_{[v, [\mathbf{n},w]]} g + i_{[[\mathbf{n},v],w]}g \\
  &= - d( X \cdot \psi - Y \cdot \phi)
      + i_{[v,\gradgam \psi]}g - i_{[w,\gradgam \phi]} g \\
  &= - d( X \cdot \psi - Y \cdot \phi)
      + (\Lie_v i_{\gradgam \psi} - i_{\gradgam \psi} \Lie_v) g
      - (\Lie_w i_{\gradgam \phi} - i_{\gradgam \phi} \Lie_w) g \\
  &= - d( X \cdot \psi - Y \cdot \phi) + \Lie_v d\psi - \Lie_w d\phi
      + i_{\gradgam \phi} \Lie_w g - i_{\gradgam \psi} \Lie_v g \\
  &= i_{\gradgam\phi}(\Lie_Y \gamma - 2 \psi K) 
    - i_{\gradgam \psi}(\Lie_X \gamma - 2 \phi K) \\
  &= i_{\gradgam \phi} \Lie_Y \gamma - i_{\gradgam\psi} \Lie_X \gamma
      + 2 i_{(\phi \,\gradgam \psi - \psi\, \gradgam \phi)} K 
\,.
\end{split} 
\end{equation*}
In the second step, we have used that $i_\mathbf{n}g = -dt$. On the
hypersurface $\Sigma \times \{0\}$, we may choose $X$, $\phi$, $Y$, and
$\psi$ arbitrarily.  For $X = Y = 0$ there, 
the right hand side of the last equation is the second fundamental
form contracted with $\psi\, \gradgam \phi - \phi\, \gradgam\psi$,
which is generally not zero. 
\end{proof}

The anchor $A\cale\Sigma\stackrel{\rho}{\rightarrow} T\calu\Sigma$
of our Lie algebroid is given, up to a sign by the action computed in the previous
section:
\begin{equation}
\label{eq:action1}
  \rho(X, \phi, g) := -\Lie_{X(t) + \phi \mathbf{n}} g 
  = -\Lie_{G_g (X,\phi)} \gamma - \phi \, \dot{\gamma} \,,
\end{equation}
where $g = \gamma(t) - \frac{1}{2} dt^2$ is a gaussian metric on
$\Sigma \times \reals$ and $G_g (X,\phi)=X(t) + \phi \mathbf{n}$ is the $g$-gaussian
extension of $X + \phi \mathbf{n}$.  This Lie derivative represents
the change in the ``appearance'' of the metric on the ambient manifold
as the ``viewpoint'' changes 
according to a vector field $X+\phi \mathbf{n}$ along a space-like embedding. 
Note that the anchors of two constant sections are applied consecutively to a metric as $\rho(X,\phi) \rho(Y,\psi) (g) = \calL_{G_g (Y,\psi)} \calL_{G_g(X,\phi)} g$, that is, by applying the Lie derivatives of the gaussian extensions in reverse order. This takes care of the negative signs in Eq. \eqref{eq:action1}.

The kernel of $\rho$ consists of
those $(X,\phi)$ whose $g$-gaussian extension is a Killing
vector field. It follows that the dimension of this kernel is 
zero\footnote{The dimension of the kernel of $\rho$ is always
finite; it is at most $\frac{1}{2}(n+1)n + n+1$, where $n=\dim
\Sigma$, with equality only when $g$ has constant sectional
curvature.} over an open dense subset of $\calu\Sigma$,
since any $\Sigma$-universe is contained in a 1-parameter family whose
generic members have no isometries.  (Local
perturbations of the lorentzian metric
 suffice to achieve this; deeper results in the riemannian case go
back at least as far as \cite{eb:manifold}.)
It follows immediately that
the bracket on sections of the
Lie algebroid $A\cale\Sigma$ 
is determined by that on their images under the
anchor (\ref{eq:action1}). 
In particular, the bracket of constant sections
$(X,\phi)$ and $(Y,\psi)$ of the trivial bundle must be that given by
given by Eq.~(\ref{eq:Katzbrack}):
\begin{equation}
\label{eq:Katzbracket1}
  [(X,\phi), (Y, \psi)]
  = ([X, Y] + \phi \,\gradgam\psi - \psi \,\gradgam \phi, 
   X \cdot \psi - Y \cdot \phi) \,,
\end{equation}
where we now evaluate $X$ and $\phi$ at $t=0$.  
Together with the anchor
(\ref{eq:action1}), this bracket determines the Lie
algebroid structure on $A\calu\Sigma$.  
To summarize, we have:

\begin{thm}
\label{thm-algebroidoveruniverses}
  The 
  $\Sigma$-evolutions $\cale \Sigma$ form a groupoid over the 
$\Sigma$-universes  $\calu\Sigma$.
Each orbit of this groupoid consists of all $\Sigma$-universes
which are represented by $\Sigma$-spaces
in a fixed spacetime
$(M,g)$.
  The Lie
  algebroid $A\cale\Sigma$ has a natural identification with the 
 trivial bundle $ \mathcal{U}\Sigma \times (\calX\Sigma \oplus
  \calF\Sigma).$  Under this identification,
in the gaussian representation of ambient metrics,  
the anchor is the Lie derivative by gaussian extensions:
  $
\rho(X,
  \phi, g) = \calL_{G_g (X,\phi)}g.$
 The bracket of constant
  sections is given by (\ref{eq:Katzbracket1}).
\end{thm}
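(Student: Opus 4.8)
The statement is the synthesis of the constructions of this section, so the plan is to assemble the pieces already proved and to supply the single argument that has only been sketched, namely the identification of the bracket of constant sections. First I would dispatch the groupoid axioms by a direct check from the source, target, composition, and inversion maps displayed in Section~\ref{sec-evolutions}. The one point needing care is that composition descends to equivalence classes: to compose $[i_2,i_1]$ with $[i_1,i_0]$ one first uses an isometry identifying the two representatives of the shared $\Sigma$-space $i_1$ to bring both pairs into one spacetime, and independence of that choice follows from the fact (already used for the isotropy groups) that an isometry of a connected Lorentz manifold fixing a $\Sigma$-space is unique. Smoothness of the structure maps for the representative-based diffeology is checked just as for $\calu\Sigma$, following Appendix~A. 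For the orbits: $[i_1]$ lies in the orbit of $[i_0]$ iff there is a $\Sigma$-evolution $[i_1,i_0]$, which by definition means $i_0$ and $i_1$ can be realized as $\Sigma$-spaces in a common spacetime; conversely any two $\Sigma$-spaces in a fixed $(M,g)$ produce $\Sigma$-universes joined by such an evolution, so the orbit through $[i]$ is exactly the set of $\Sigma$-universes admitting a $\Sigma$-space representative in the spacetime of $[i]$.

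Next I would invoke the earlier proposition trivializing $A\cale\Sigma$: an element over $[i_0]$ is a vector field along $i_0$, and splitting it by the unit future normal $\mathbf n$ into a tangential field $X$ and a normal part $\phi\,\mathbf n$ identifies each fibre with $\calX\Sigma\oplus\calF\Sigma$. To read off the anchor I would work in the gaussian normal form, so that the ambient metric is $g=\gamma(t)-\tfrac{1}{2}dt^2$ on $\Sigma\times\reals$, take the $g$-gaussian extension $G_g(X,\phi)$ of Proposition~\ref{obs:gaussextend2}, and observe, via the computation leading to~\eqref{eq:gaussact}, that the anchor of the constant section $(X,\phi)$ is $\calL_{G_g(X,\phi)}g=\calL_X\gamma-2\phi K$, the sign in~\eqref{eq:action1} being absorbed into the order in which the Lie derivatives of gaussian extensions compose. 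That is the anchor asserted in the statement.

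The substantive step is then the bracket of constant sections. I would first record, from~\eqref{eq:bracketsplit}, the Lie bracket~\eqref{eq:Katzbrack} of two $g$-gaussian vector fields, which already exhibits the relations~\eqref{brackets}, and then argue by genericity: the kernel of the anchor at $[i]$ consists of the $(X,\phi)$ whose $g$-gaussian extension is Killing, and since every $\Sigma$-universe embeds in a one-parameter family whose generic member has no nontrivial Killing field — arbitrarily small perturbations of the Lorentz metric suffice — the anchor is fibrewise injective over an open dense subset of $\calu\Sigma$. Over that subset the Lie algebroid identity $\rho([s_1,s_2])=[\rho(s_1),\rho(s_2)]$ pins down $[s_1,s_2]$; one checks it equals~\eqref{eq:Katzbracket1} by computing the commutator of the two anchor vector fields using $\rho(X,\phi)\rho(Y,\psi)(g)=\calL_{G_g(Y,\psi)}\calL_{G_g(X,\phi)}g$ and comparing with $\rho$ of~\eqref{eq:Katzbracket1}; and finally, since the bracket of constant sections is a smooth section agreeing with~\eqref{eq:Katzbracket1} on a dense open set, the two coincide everywhere.

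The hard part will be exactly this last commutator computation together with the density extension. Because $G_g$ itself varies with $g$, the naive identity $[\calL_{G_g(X,\phi)},\calL_{G_g(Y,\psi)}]=\calL_{[G_g(X,\phi),G_g(Y,\psi)]}$ acquires correction terms from the variation of the gaussian extension along the flow, and one must verify that these land in the kernel of the anchor — equivalently, that the commutator of the anchors coincides with $\rho$ of~\eqref{eq:Katzbracket1} precisely where the anchor is injective — and that the diffeological Lie algebroid structure is regular enough for agreement on a dense open subset to force agreement globally. Everything else is routine bookkeeping with gaussian normal forms, aided by the first-order formula~\eqref{eq:gaussianfirstorder}.
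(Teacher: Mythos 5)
Your proposal is correct and follows essentially the same route as the paper, which presents this theorem as a summary of the preceding development: the groupoid axioms and orbit description from the definition of $\Sigma$-evolutions, the trivialization of $A\cale\Sigma$ via the normal/tangential splitting, the anchor read off from \eqref{eq:gaussact}, and the bracket of constant sections pinned down by the fibrewise injectivity of the anchor over the open dense set of universes with no Killing fields. The cautions you raise at the end (correction terms from the $g$-dependence of $G_g$, and extending from a dense open set in the diffeological setting) are legitimate points that the paper also leaves implicit rather than resolving.
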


\subsection{An equivalent groupoid and the moduli stack of spacetimes}
\label{subsec-stack}
This section is peripheral to the main argument of this paper, but it
suggests another point of view toward the groupoid $\cale\Sigma$ of
$\Sigma$-evolutions.  

We have just seen that the orbits of $\cale\Sigma$
are in one-to-one correspondence
with isometry classes of $\Sigma$-adapted spacetimes, while the
isotropy groups in $\cale\Sigma$ are just the isometry groups of those
spacetimes.  It turns out that $\cale\Sigma$ is equivalent to another
groupoid in which $\Sigma$ plays a much less central role.   We
only require that the spacetimes be $\Sigma$-adapted, without ever
specifying the placement of $\Sigma$.   For the notion of equivalence
of groupoids, we refer to \cite{bl-we:grouplike} and  \cite{mo-mr:lie}.

\begin{dfn}
The groupoid $\cali\Sigma$ is defined to be that in which the objects
are $\Sigma$-adapted spacetimes, and the morphisms are isometries
between these spacetimes.  
\end{dfn}

\begin{rmk}
Strictly speaking, $\cali\Sigma$ is not a groupoid, since the spacetimes do not
form a set.  But it is equivalent to its wide subgroupoid consisting
of isometries between
those spacetimes whose underlying manifolds are
submanifolds of euclidean spaces.
\end{rmk}

$\cali\Sigma$ may be identified with an action groupoid whose objects are
the elements of the collection
$\cals\Sigma$ of all $\Sigma$-adapted spacetimes
(or, alternatively, a set as in the remark above).  The groupoid acting on
$\cals\Sigma$ consists of the diffeomorphisms between these objects.
If $(M_1,g_1)$ is a $\Sigma$-adapted spacetime and $\Phi$ is a diffeomorphism
to a manifold $M_0$ from $M_1$, then the result of acting on
$(M_1,g_1)$ by $\Phi$ is defined to be the spacetime $(M_0,(\Phi^{-1})^*
g_1)$.  $\Phi$ then becomes an isometry and so may be considered as a
morphism in $\cali\Sigma$.   

\begin{prop}
The groupoids $\cali\Sigma$ and $\cale\Sigma$ are equivalent.
\end{prop}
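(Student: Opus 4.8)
The plan is to construct an explicit equivalence of groupoids by exhibiting a fully faithful, essentially surjective morphism $\cale\Sigma \to \cali\Sigma$ (or, depending on which direction is technically cleaner, a common ``bibundle'' or span of weak equivalences). The most economical route is to define a functor $F\colon \cale\Sigma \to \cali\Sigma$ on objects by sending a $\Sigma$-universe $[(M,g)\hookleftarrow\Sigma]$ to its underlying spacetime isometry class, i.e. forgetting the embedding $i$, and on morphisms by sending a $\Sigma$-evolution $[i_1,i_0]$, represented by a pair of $\Sigma$-spaces in a common $(M,g)$, to the isometry of $(M,g)$ with itself (namely the identity, or the ambient isometry in the general representative). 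One then checks that this is a well-defined smooth (diffeological) functor, that it is essentially surjective — which is immediate from the very definition of ``$\Sigma$-adapted'', since any $\Sigma$-adapted spacetime admits at least one $\Sigma$-space and hence arises as $F$ of some object — and that it is fully faithful in the appropriate diffeological sense.

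First I would set up the forgetful functor carefully, being attentive to the fact that $\cale\Sigma$ is defined via isometry classes of \emph{pairs} of embeddings while $\cali\Sigma$ is (up to the set-theoretic remark) the groupoid of isometries between $\Sigma$-adapted spacetimes, presented as an action groupoid on $\cals\Sigma$. On morphisms, a $\Sigma$-evolution $[i_1,i_0]$ in $(M,g)$ determines the spacetime $(M,g)$ together with the datum of the diffeomorphism $i_1\circ i_0^{-1}$ between the two hypersurfaces; but as a morphism of \emph{spacetimes} we want an isometry of $(M,g)$, and here one must recall from the earlier discussion of the isotropy groups that the ambient isometry is unique if it exists. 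So $F$ on morphisms should send $[i_1,i_0]$ to the isometry class of $(M,g)$ with the identity isometry, except that this collapses too much; the correct statement is that the fibre of $F$ over a fixed pair of objects is exactly one point, which is precisely full faithfulness. I would verify this by fixing two $\Sigma$-adapted spacetimes $(M,g)$, $(M',g')$ and an isometry $\psi\colon M\to M'$, choosing $\Sigma$-spaces $i_0$ in $M$ and $i_0'$ in $M'$ representing chosen preimage objects, and showing that $[i_0', \psi\circ i_0]$ is the unique $\Sigma$-evolution mapping to the class of $\psi$; uniqueness uses the rigidity of ambient isometries consistent with coorientations, exactly as in the isotropy-group computation.

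Then I would address essential surjectivity and the smoothness/diffeological bookkeeping. Essential surjectivity: given any $\Sigma$-adapted spacetime, by definition it carries a cooriented space-like embedding of $\Sigma$, so it lifts to an object of $\cale\Sigma$, and $F$ of that object is (isometric to) the given spacetime — indeed isometric via the identity, so $F$ is literally surjective on objects after passing to the submanifold-of-$\reals^n$ representatives that both sides use. For diffeology, I would invoke the functional diffeologies on $\calU\Sigma$ and $\cale\Sigma$ defined in terms of representatives (as spelled out in the four-part definition for $\calU\Sigma$ and its analogue for $\cale\Sigma$) and on $\cals\Sigma$/$\cali\Sigma$, and check that $F$ pulls back plots to plots — this is essentially automatic since $F$ only forgets structure and the diffeologies on both sides are built from families of ambient manifolds with smoothly varying metrics. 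For the equivalence statement itself I would cite the notion of weak equivalence (essentially surjective + fully faithful, in the diffeological/Lie sense) from \cite{bl-we:grouplike}, \cite{mo-mr:lie}.

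The main obstacle, I expect, is not any single computation but rather being precise about what ``fully faithful'' means diffeologically and about the set-theoretic gymnastics: $\cali\Sigma$ is only genuinely a groupoid after restricting to submanifolds of euclidean spaces (per the Remark), and one must make sure the functor $F$ interacts correctly with that restriction and with the earlier footnote that $\calU\Sigma$ is a set for the same reason. A secondary subtlety is checking that $F$ is essentially surjective \emph{as diffeological groupoids}, i.e. that the relevant map from morphisms-over-objects is a subduction, not merely surjective on points; here one must produce, locally over a plot of objects in $\cals\Sigma$, a smooth family of $\Sigma$-spaces realizing the lift, which is possible because $\Sigma$-adaptedness is an open condition and cooriented space-like embeddings can be chosen to depend smoothly on parameters (shrinking the parameter domain if necessary, exploiting that tangent vectors and the diffeology are insensitive to domain variation, just as in the proof of Proposition \ref{prop:tangentuniverses}). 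Once these points are handled, assembling them into the statement that $\cali\Sigma$ and $\cale\Sigma$ are equivalent is routine.
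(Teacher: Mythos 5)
Your overall strategy --- exhibiting a weak equivalence $F\colon \cale\Sigma \to \cali\Sigma$ that is essentially surjective and fully faithful --- is a legitimate alternative to what the paper actually does: the paper constructs a biprincipal bibundle whose total space is the collection $\calh\Sigma$ of all $\Sigma$-spaces in all spacetimes, with $\cali\Sigma$ acting by post-composition and $\cale\Sigma$ acting through the unique realizing isometries, and then reads the equivalence off from freeness and transitivity of the two actions. The genuine gap in your version is that $F$ is never correctly defined on morphisms, and the placeholder you offer is wrong. Sending every $\Sigma$-evolution $[i_1,i_0]$ to the identity isometry of its ambient spacetime cannot be fully faithful: the paper shows that the isotropy group of $[i]$ in $\cale\Sigma$ is the \emph{full} isometry group of the target spacetime --- which is exactly the isotropy group of the corresponding object of $\cali\Sigma$ --- and your assignment collapses it to the trivial group. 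Your attempted repair, ``the fibre of $F$ over a fixed pair of objects is exactly one point, which is precisely full faithfulness,'' misstates both sides: full faithfulness is a bijection on hom-sets, and $\Hom_{\cale\Sigma}([i_0],[i_1])$, when nonempty, is a torsor over the isotropy group of $[i_0]$, hence over the isometry group of the ambient spacetime, not a singleton. The subsequent sentence about $[i_0',\psi\circ i_0]$ being ``the unique $\Sigma$-evolution mapping to the class of $\psi$'' presupposes the very definition of $F$ on morphisms that is missing.

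The missing ingredient is a cleavage. Choose for each object $[i]$ of $\cale\Sigma$ a representative $M_{[i]}\stackrel{i}{\hookleftarrow}\Sigma$ and set $F([i])=M_{[i]}$. Given a morphism $[j_1,j_0]$ represented in $(N,h)$ with $[j_0]=[i_0]$ and $[j_1]=[i_1]$, the rigidity statement you correctly invoke (an isometry of a connected spacetime fixing a hypersurface and its coorientation is the identity) yields \emph{unique} coorientation-consistent isometries $\alpha\colon N\to M_{[i_0]}$ and $\beta\colon N\to M_{[i_1]}$ with $\alpha\circ j_0=i_0$ and $\beta\circ j_1=i_1$; define $F([j_1,j_0])=\beta\circ\alpha^{-1}$. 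With this definition your argument for full faithfulness goes through, and surjectivity on hom-sets follows by taking, for a given isometry $\chi\colon M_{[i_0]}\to M_{[i_1]}$, the pair $j_0=i_0$ and $j_1=\chi^{-1}\circ i_1$ with the pulled-back coorientation. Note, however, that this route requires a global choice of representatives, on which the smoothness of $F$ in the representative-based diffeologies then depends; this is precisely the bookkeeping the paper's bibundle construction avoids, since its moment maps and actions are defined without any choices.
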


\begin{proof}
An equivalence between
groupoids is given by a biprincipal bundle, i.e. a space on which
the groupoids have commuting free actions, with the fibres of the moment
map\footnote{An action of a groupoid $G$ over $G_0$ on a space $X$
  includes as part of its data a map $G_0 \from X$ which determines
  which groupoid elements act on which elements of $X$.  This map is
  sometimes called the {\bf moment map} of the action.}
 of each one being the orbits of the other.   Such a bibundle
induces bijections between the orbit spaces of the two groupoids and
between the isotropy groups of corresponding objects.   

To get an equivalence between $\cali\Sigma$ and
$\cale\Sigma$, we take as total space of our bundle the collection
$\calh\Sigma$ of all $\Sigma$-spaces in all possible spacetimes.

The moment map for the left action of $\cali\Sigma$
forgets the embedding and remembers only the target.  
The typical morphism in $\cali\Sigma$ is an isometry 
$M' \stackrel {\psi}\from M$ 
between spacetimes.  It acts on any $\Sigma$-space
$M \stackrel{i}{\hookleftarrow}\Sigma$ to give the (equivalent) embedding 
$M'\stackrel{\psi\circ i}{\hookleftarrow} M$.  This action is free because an
isometry of a connected manifold 
which fixes a hypersurface and its normal bundle must be the
identity.

The moment map for the right action of $\cale\Sigma$
takes each  $M'\stackrel{\psi}{\hookleftarrow} M$
to its equivalence class $[i]$. 
If $i \in \calh\call$ and $[i_1,i_0]$ are such that $[i]=[i_1]$, then
we define the right action of $[i_1,i_0]$ on $i$  as
follows.  The equivalence between $M\stackrel{i}{\hookleftarrow}\Sigma$ and 
$M'\stackrel{i'}{\hookleftarrow}\Sigma$
is realized by a unique isometry $M'\stackrel{\psi}{\from}M$
such that $\psi\circ i_1 = i$.  Then $i \cdot [i_1,i_0]$ is defined
to be the embedding $M'\stackrel{\psi\circ i_0}{\hookleftarrow} \Sigma$.  
To see that this action is free, suppose that $i \cdot [i_1,i_0] =
i$.  Then $M=M'$ and $\psi\circ i_0 = i = \psi \circ i_1,$ so $i_1 =
i_0$, and $[i_1,i_0]$ is an identity morphism.

The transitivity of the left action on the right moment fibres is just
a restatement of 
the definition of equivalence used in defining the
$\Sigma$-universes.  Transitivity of the right action on the left
moment fibres is obvious, since the morphisms in $\cale\Sigma$ are
(equivalence classes) of pairs of embeddings into the same
target.
\end{proof}

\begin{rmk}
The notion of {\bf stack} was introduced in algebraic geometry 
and has recently migrated to differential geometry
\cite{be-xu:differentiable}; the purpose of the notion is to provide a
description of spaces of equivalence classes when it is important to
keep track of the multiple ways in which objects can be equivalent and
thereby to overcome difficulties related to singular behavior of
quotient spaces.  
One way to understand stacks is to see them as {\bf presented} by
groupoids, where equivalent groupoids determine ``the same'' stack in
the same sense that a given manifold may be described in different
ways by overlapping families of coordinate charts.

The equivalence between $\cale\Sigma$ and $\cali\Sigma$ which we have
just proven shows that we
may consider $\cale\Sigma$ as a presentation of the moduli stack of
$\Sigma$-adapted spacetimes, i.e. the isometry classes (including
information about self-isometries) of spacetimes admitting 
$\Sigma$ as an ``instantaneous space'', or
``initial condition''.  (Note
that this is a purely ``kinematic'' construction, as 
we have not imposed any dynamical condition such as the
vacuum Einstein
equations.)
\end{rmk}

\section{Discussion: the descent problem}

We have constructed a groupoid over $\calu\Sigma$ whose Lie algebroid
bracket, for constant sections in a natural local trivialization,
exactly matches the bracket relations on the constraints for
Einstein's equations.  To establish a more direct relation with the
constraints themselves, we would need to find similar structures on
the phase space $T^*\calm\Sigma$.

There is a natural projection $\calp$ to $ T\calm\Sigma$ from $\calu\Sigma$,
assigning to every $\Sigma$-universe the 1-jet with respect to $t$
at $t=0$ of its gaussian representation as a path in $\calm\Sigma$.
(Note that this is well defined even if the $\Sigma$-universe is not
cylindrical.)
But there is no way to push our Lie algebroid
forward under this projection, essentially because the value of the
anchor at a given 1-jet would have to depend on the 2-jet.  

To
surmount this difficulty, we tried to use a second-order evolution equation
on $\calm\Sigma$, i.e. a rule which expresses 2-jets in terms of
1-jets, such as the Einstein evolution equations themselves.  But the
resulting anchor was not consistent with the bracket relations.

We also tried ``reverse engineering'', \emph{defining} the anchor so
that it would take constant sections to the hamiltonian vector fields
of the constraint functions.   But the anchor so-defined, when applied to
two constant sections associated to functions $\phi$ and $\psi$ on $M$, 
generally takes their bracket as defined by
the metric-dependent relations \eqref{brackets}
to a vector field which is not even hamiltonian.  

\section{Some history}
\label{subsec-history}
A hamiltonian formulation of general relativity can be found in the
work of Pirani and Schild \cite{Pi-Sch}, aimed at the quantization of
Einstein's gravitational field equations.  These authors, as well as
Bergmann, Penfield, Schiller, and Zatzkis \cite{Berg-Pen-Schil_Zat}
consider a physical state at a certain time to be given by data on a
space-like hypersurface, which must in some sense be arbitrary in
order to maintain four-dimensional covariance.

Bergmann \cite{Berg} had by then already begun a systematic study of
covariant field theories of general type, addressing the problem of
bringing general relativity into the canonical form as a preliminary
step to quantization. While canonical quantization of field theories
was being developed, it soon became clear that general relativity
posed additional difficulties connected with the degeneracy of the
lagrangian which was a consequence of four-dimensional diffeomorphism
invariance.  In fact, Dirac's original work \cite{Dirlect} on
constrained dynamics was inspired in large part by this problem.

It was noted by Pirani, Schild, and Skinner \cite{Pi-Sch-Ski} and not
long after by Dirac \cite{di:GravHam} and Arnowitt, Deser, and Misner
(ADM) \cite{ADMS} that great simplifications could be made at the
expense of giving up four-dimensional symmetry.  Such simplifications
became possible by fixing a foliation of spacetime by space-like
hypersurfaces, with the physical states living on these surfaces.
Such a spacetime decomposition leads to the decomposition of vectors
along each hypersurface into their normal and tangential components,
and the metric tensor itself may be presented in the form
\[
g^{(4)}_{\mu\nu}=
\left(\begin{array}{cc}
  N^{2}+N_{s}N^{s} & N_{n} \\
  N_{m} & g_{mn}
\end{array}\right) \,,
\]
where the lapse function $N$ and the shift 3-vector $N^n$ describe the
variation of the time and space coordinates on infinitesimally close
space-like hypersurfaces.   The lapse and shift can be chosen
arbitrarily but enter in the constraint equations, which arise from
the degeneracy of the lagrangian and are given by time components of
the Einstein field equations $G^{0\mu}=0$. It had already been
realized by Dirac and many others that the shift functions $N_{n}$
generated coordinate transformations on the hypersurfaces, whereas the
lapse $N$ was related to time translation. In this sense, lapse and
shift could be viewed as gauge potentials that had to be fixed in
order to solve the initial value problem. The lapse and shift
functions were first introduced in \cite{ADM2}. Their geometrical
meaning was explained in \cite{Wheeler} and in more global terms by
Fischer and Marsden \cite{FisherMarsden}, who were 
perhaps the first authors
 to suggest that the constraints should be seen as something like a momentum map.

 Still in search of avoiding the shortcomings of coordinate-dependent
 language, Kucha\v{r} \cite{Kuchar} argued that field dynamics does
 not take place in spacetime, or along a single foliation of
 hypersurfaces but in what he called hyperspace, an
 infinite-dimensional manifold consisting of all the spacelike
 hypersurfaces in a given spacetime.

After the appearance of \cite{di:GravHam}, Katz \cite{ka:crochets}
found the formulas for the Poisson brackets of the constraint
functions.    In their space-integrated form, the brackets were first
computed by DeWitt \cite{dw:QuantGrav}. While the computation of the
brackets was straightforward, their geometric interpretation was not
satisfactory. A number of authors have tried to give a more conceptual
derivation, e.g., by studying hypersurface deformations
\cite{Teitelboim}, as a method to guarantee the path-independence of
geometrodynamical evolution \cite{KK}, or by generalizing the concept
of transformation groups \cite{be-ko:coordinate} \cite{Wheeler}. 
In particular, Teitelboim \cite{Teitelboim} was perhaps the first 
to show that the Poisson bracket relations are purely a consequence of
the geometry of hypersurfaces in a riemannian or lorentzian manifold,
independent of any particular field theory.  In a sense, our paper may be
seen as setting Teitelboim's argument in its proper mathematical setting, that
of groupoids and Lie algebroids.

The
most ambitious approaches aimed at recovering diffeomorphism
covariance of the initial value problem
\cite{ho-ku-te:geometrodynamics} \cite{is:canonical}
\cite{IshamKuchar}
\cite{Lee-Wald}.
Parallel developments were also made by Wheeler
\cite{Wheeler}. Inspired by the dynamical description of the
electromagnetic field in terms of the vector potential, he proposed to
describe the dynamics of three-space metrics through the propagation
of the intrinsic metric of space-like hypersurfaces with respect to a
time coordinate. Such a time parameter would label the leaves the
dynamically produced spacetime. In this approach the extrinsic
curvature of these hypersurfaces corresponds to the canonical
momenta. The configuration space of this theory is known as Wheeler's
superspace. Trajectories of metrics in this space produce
four-dimensional spacetime geometries.  

Since the bracket of constraint functions is metric dependent, this
suggested to Bergmann and Komar \cite{be-ko:coordinate} that the
associated symmetries could be metric-dependent as well.  Their paper
contains many ideas which are very close to ours, although the language is
somewhat different.  In particular, we would say that their
``Q-type transformations'' are precisely the bisections of the action
groupoid (essentially our $\mathcal{I}\Sigma$, but without a specific
choice of $\Sigma$), associated to the diffeomorphisms of a 4-manifold
$M$ acting on the function space of lorentzian metrics on $M$.  Their
infinitesimal transformations are the sections of the action Lie
algebroid; their equation (3.1) is precisely the formula for the
bracket of sections in this Lie algebroid!  Bergmann and Komar also
observe that the orbits are isometry classes and that the action fails
to be faithful in the presence of isometries.

Bergmann and Komar even make the tantalizing statement, ``That these
transformations form a group, or at least a groupoid, is seen from
their definition.'' Unfortunately, groupoids do not reappear anywhere
in the paper, and it is not clear what notion of groupoid the authors
had in mind.  In particular, although \cite{be-ko:coordinate} includes
a discussion of the set of diffeomorphisms between hypersurfaces in
spacetime (the infinitesimal transformations being 4-vector fields
along these hypersurfaces), there is no suggestion that these form a
groupoid.

The idea of associating the Q-type transformations with diffeomorphism
invariance of general relativity appeared also in \cite{Castell} where
these transformations arise from ``field dependent'' gauge
generators. These field dependent generators appeared also in
 \cite{Pons}, \cite{PonSa2},  \cite{PonSa1}, and \cite{Salis},
and more recently in \cite{Muk}.

Hojman, Kucha\v{r}, and Teitelboim \cite{ho-ku-te:geometrodynamics}
look at space-like embeddings into a fixed lorentzian manifold and
find what is more or less the Lie algebroid bracket for the Lie
algebroid of the groupoid of diffeomorphisms between hypersurfaces.

Finally, we should at least mention the immense analysis literature on
existence theory, both local and global, for the Einstein initial
value problem, beginning with fundamental work of Lichnerowicz
\cite{li:problemes} and Four\`es-Bruhat ($=$ Choquet-Bruhat)
\cite{fo:theoreme} and continuing to this day.  (See \cite{ch-results}
for a fairly recent survey.)  We hope that our work provides new
geometric understanding which may contribute to both the analysis and
the quantization of the Einstein field equations.

\appendix
\section{Diffeology}
Although some of the infinite-dimensional
spaces of smooth mappings in this paper
may be considered as Fr\'echet manifolds, this analytical structure
is not necessary for formal computations.
Instead, we work in the framework of diffeological spaces.  
These objects were introduced\footnote{A very similar notion was
introduced by Chen \cite{ch:iterated}, and different notions of
 ``smootheology'' are
compared in \cite{st:comparative}} by Souriau \cite{so:groupes}
and developed extensively by Iglesias \cite{ig:these} and others.
We give a brief introduction to diffeology here and refer
to \cite{he-ma:diffeological}, \cite{le:diffeological}, and
the work-in-progress \cite{ig:diffeology} for further details.  
Similar notions in the topological setting are discussed 
in \cite{ay:geometric}.   

Roughly speaking, we can do a great deal of differential geometry 
on a set $X$ once we know what it means for a family of elements of $X$ to
depend smoothly on parameters.    

\begin{dfn}
A {\bf parameter space} is an open subset of $\reals^n$ for some $n$,
and a {\bf parametrization} of
a set $X$ is a  map to $X$ from some parameter space $P$.
A {\bf diffeology} on $X$ is a set $\cald$ of parametrizations of $X$ 
which contains all constant maps, which is closed
under composition on the right with smooth (in the usual sense) maps between
parameter spaces, and which is locally defined in the sense that a
parametrization is in  $\cald$ if and only if its restrictions to
all the sets in some open covering are.  The elements of $\cald$ are
called {\bf plots}, or {\bf smooth parametrizations}.
$(X,\cald)$ 
is called a {\bf diffeological space}; we denote it simply by $X$
when it is clear what diffeology is being used.  
A plot $X\stackrel{\phi}{\from} P$ with $x=\phi(0)$  is 
called a {\bf plot at} $x$.
\end{dfn}

{
\begin{rmk}
  In the language of sheaf theory, a diffeological space is a concrete
  sheaf on the site of open subsets of euclidean spaces
  \cite{BaezHoffnung:Convenient}. This point of view is particularly
  well suited to show that the category of diffeological spaces has
  small limits (taken point-wise), small colimits (first taken
  point-wise, then sheafified), and exponential objects (given by the
  universal property), thus allowing for constructions such as
  quotient spaces, pull-backs, mapping spaces, etc. that generally
  fail to exist for smooth manifolds. For our purposes, however, we
  will need to give the explicit descriptions of these constructions.
\end{rmk}
}

Just as in topology, every set $X$ carries the {\bf discrete} diffeology,
for which only the locally constant maps are plots, and the {\bf coarse}
diffeology, for which every parametrization is smooth.
If $X$ is a (finite-dimensional) manifold, the usual smooth maps
to $X$ from parameter spaces form the ``standard'' diffeology.

Diffeological spaces are the objects of a category in which the
morphisms  are the smooth maps, defined as follows.

\begin{dfn}
A map $X\stackrel{f}{\from}Y$ between diffeological spaces
is a {\bf smooth map} if
$f\circ \phi$ is a plot for $X$ whenever $\phi$ is a plot for $Y$. 
We denote the set of all such smooth maps by $\cinf(X,Y)$.  (Note that
many authors denote it by $\cinf(Y,X)$.)  A smooth map with a smooth
inverse is a {\bf diffeomorphism}.
\end{dfn}

A smooth map between
manifolds with the standard diffeologies is just a smooth map in
the usual sense.   The parametrizations of any diffeological space which
are smooth are just the plots, so the term ``smooth parametrization''
has an unambiguous meaning.

Any diffeological space $(X, \cald)$ carries the $\cald$-topology, 
defined as the
finest topology for which all plots are continuous; i.e., a
subset $U\subseteq X$ is open [closed] if and only if $\phi^{-1}(U)$ is
open [closed] for every plot $\phi$.   Smooth maps are always
$\cald$-continuous.
 
A product $X\times Y$ of diffeological spaces
carries a  {\bf product diffeology}, whose plots are the
parametrizations whose compositions with the projections to $X$ and
$Y$ are smooth.  Each
subset of a diffeological
space has a natural {\bf subspace diffeology} in which the plots are those
parametrizations whose composition with the inclusion is smooth.
The restriction of a smooth map to any subset with the subspace
diffeology is again smooth.

Any set $\cald_0$ of maps to $X$ from diffeological spaces
{\bf generates} the diffeology $\overline{\cald}_0$ consisting of all those
parametrizations which are locally  
compositions of the form $\phi\circ s$, where $\phi$ is in  $\cald_0$ and 
$s$ is a parametrization of the domain of $\phi$, together with all
constant maps from parameter spaces.   (The latter are already
included if the images of the elements of $\cald_0$ cover $X$.)

If $\cald_0$ consists of a single surjective map $X\from Y$ from a
diffeological space $Y$, the diffeology which it generates is called
the {\bf quotient diffeology}.   Conversely, any
diffeology is the quotient diffeology for the union of all
its plots, considered as a single map 
defined on the disjoint union of the domains of the plots.  

  If $X$ and $Y$ are
diffeological spaces, the {\bf functional diffeology}
 on $\cinf(X,Y)$ is  that for which 
a parametrization $\cinf(X,Y)\stackrel{~\phi}{\from}P$ is a
plot if and only if the corresponding evaluation map $X\from Y\times P$ is
smooth.  The ``exponential law''
$\cinf(\cinf(X,Y),Z)) \cong \cinf(X,Y \times Z)$ then holds for all
$X$, $Y$, and $Z$, not just in the defining case where
$Z$ is a parameter space, and the
composition operations $\cinf(X,Z)\from \cinf(X,Y) \times \cinf(Y,Z)$
are smooth.

The subspace diffeology construction produces diffeologies
on spaces of mappings satisfying extra conditions, such as
spaces of diffeomorphisms or embeddings, spaces of sections of smooth
bundles (e.g. tensors),
and  solution spaces of ordinary or partial differential equations.

We can also define diffeologies on spaces of mappings with variable
domains.  For simplicity, we let these domains be subsets of a fixed
space.  Let $[Y]$ be some collection of subsets of a diffeological
space $Y$ (for instance the open subsets, if $Y$ carries a topology),
and let $\cinf(X,[Y])$ be the set of all mappings to $X$ whose domains
belong to $[Y]$.  Thinking of a parametrization 
$\cinf(X,[Y])\from P$ 
as a family $X \stackrel{f_p}{\from}Y_p$ of maps
parametrized by $p\in P$, we call it a plot when the evaluation map
$X \from D \subseteq Y \times P$ is smooth, where the domain 
$D=\{(y,p)|y\in Y_p)\}$ carries the subspace diffeology.  
When $Y$ is a manifold and $[Y]$ consists of the open subsets,
it may be appropriate to restrict the diffeology to consist of those
families for which the domain $D$ is open in $Y\times P$, so
that we are always dealing with maps defined on  manifolds.  In this case,
we denote the space of mappings by $\cinf(X,Y_{\mathrm{open}})$.
In particular, if $X$ is a single point, we obtain a diffeology on the
set of open subsets of $Y$.

\begin{ex}
\label{ex-openmetric}
If $\Sigma$ is any manifold, the space $\calm\Sigma$ of riemannian
metrics on $\Sigma$ carries a functional diffeology.   So does
$\calm\Sigma_{\mathrm{open}}$, the metrics defined on open subsets of $\Sigma$.
\end{ex}

If the diffeological space $Y$ carries a
 topology (not necessarily the $\cald$-topology), $\cinf(X,Y)$ also
 carries the  
``finer'' {\bf compact functional
  diffeology} in which the plots 
$\cinf(X,Y)\stackrel{~\phi}{\from}P$ are required to satisfy the additional
condition that each $p\in P$ has a neighborhood $\calv$ for which 
all the maps in $\phi(\calv)$ agree outside some compact subset of $Y$.  
$\cinf(X,Y)$ with this diffeology is denoted by $\cinf_c(X,Y)$.

\subsection{Jets and tangent vectors}

To define jets in diffeology, we start with the basic case of
parameter spaces.  For $k=0,1,2,\ldots,\infty$,
and smooth maps $s$ and $t$
to $\reals^n$ from a parameter space $P$ containing $0$, 
$s$ and $t$ are defined to
have the same $k$-jet at $0$   
if $s(0)=t(0)$ and if their partial derivatives  through order
$k$ match at $0$.  Plots 
$f$ and $g$ to a diffeological space $X$ from $P$
will be said to have the same 
$k$-jet at 0 if there is a
plot $X\stackrel{h}{\from} Q$ such that $f= h\circ s$ and $g=h \circ t$, where
$s$ and $t$ are plots for $Q$
 the same $k$-jet at $0$.
Finally, for any diffeological spaces $X$ and $Y$, two maps
$X\from Y$
have the same $k$-jet at $y\in Y$ if their compositions with
any plot at $y$ have the same $k$-jet at 0.  (It is easy to see 
that, if $X$ and $Y$ are parameter spaces, this coincides with the
original definition.)
 
Having the same $k$-jet at $y$
is an equivalence relation $\sim ^k_y$, and two maps with the same
germ at $y$ have the same $k$-jet there for any $k$.   We thus obtain an
equivalence relation on germs, and
we  define the space
$J^k(X,Y)$ of $k$-jets of maps to $X$ from $Y$ to be the set of
pairs $(j^k_y f,y)$, where $f$ is the germ of a map to $X$ from some neighborhood
of $y$ in $y$, and  $j^k_yf$ is the equivalence class of $f$
for $\sim^k_y$.   

If $K\subset Y$, we say that $f\sim^k_K g$ if $f$ and $g$ have the
same $k$-jet at all points of $K$, and we call the equivalence
classes for this relation $k$-jets along $k$.  If $X$ is a bundle over
$Y$, we may refer to the $k$-jets of those maps $Y\from X$ which
happen to be
sections of the bundle as $k$-jets of sections.

We define diffeological structures on jet bundles by considering
them as quotients of spaecs of mappings, using the functional diffeology.
When $k=\infty$ and $X$ and $Y$ are manifolds, the sheaf diffeology is
also interesting, since the smooth maps are maps into the leaves of
a foliation.

Since constant maps are always smooth, and the $0$-jets of maps are
just their values, there is a natural identification of $J^0(X,Y)$
with $X\times Y$.
There are also natural maps $J^k(X,Y)\from J^l(X,Y)$ for $k\leq
l$.  For $k = 0$, this gives natural projections of the jet spaces to
$X$ and $Y$.  That the
 infinite jet space $J^\infty(X,Y)$ is the inverse limit of the jet
 spaces for finite $k$.    

Jets of mappings into mapping spaces are mappings into jet bundles.
If $X$, $Y$, and $Z$ are manifolds,
smooth maps
$C^\infty (X,Y)\stackrel{f}{\from} Z$ correspond to smooth maps 
$X\stackrel{F}{\from} Y\times Z.$  It follows 
that 
$J^k(C^\infty(X,Y), Z) = C^\infty (J^k(X,Z),Y).$

Of special importance are the $1$-jets at $0$ of
 maps to $X$ from neighborhoods of $0$ in $\reals$; these are the
tangent vectors.  We denote the set of all tangent vectors to $X$ by
$TX$ and call it the {\bf tangent cone bundle} of $X$.  It is the
disjoint union of tangent cones $T_x X$ at the points of $X$, which
are cones because reparametrization of curves by the action of the
multiplicative group $\reals \backslash \{0\}$ on $\reals$ leads to a
natural action of this group on the tangent spaces.  $TX$ has a
diffeological structure and projection to $X$ inherited from those on
$J^1(X,\reals)$.  

For mapping spaces between manifolds, the general result above on jet
bundles gives, with $Z=\reals$ and $k=1$,
$T C^\infty(X,Y) = C^\infty(TX,Y)$.  The tangent bundle projection
$C^\infty(X,Y)\from TC^\infty(X,Y)$ is just composition with the
projection $X\from TX$.  

Two warnings are in order.  First of all, the action of $\reals
\backslash \{0\}$ on
the tangent cone may
not be faithful.  For instance, if $X$ is the
half-line $[0,\infty)$ viewed as
$\reals / \integers_2$ with the quotient diffeology, its tangent cone at
$0$ is also a half-line on which multiplication by $-1$ is the
identity.  Second, it is generally not possible to add $1$-jets of
curves.  If it were, we would find that adding a vector $v$ at $0$ in
$\reals / \integers_2$ to itself would give both $2v$ and $0$.  Another example 
where addition is not possible is the tangent cone at $0$
to the union of the coordinate axes in $\reals^2$ with the subspace diffeology.

An example where all the tangent cones are zero is the space
$[Y]_{\rm{open}}=\cinf(\{p\},[Y])_{\rm{open}}$ of open subsets of a manifold $Y$.  A
path in $[Y]_{\rm{open}}$ is just an open subset of $I \times Y$ for
some interval $I$ of real numbers.  If $U_s$ is any such path defined
around $s=0$, it has the same tangent vector as the constant path through
$U_0$.  In fact, they factor via curves tangent at $0$ 
through the plot on $I \times \reals$ defined by 
$U_{(s,0)} = U_s$, $U_{(s,s^2)} = U_0$,
and $U_{(s,s')} = Y$ elsewhere.  
Similarly, tangent vectors to spaces $\cinf(X,[Y])_{\rm{open}}$
are insensitive to the variation of domain with $s$ and may
all be represented by families of functions with unchanging domains.
The same arguments apply to jets of any order.

\begin{ex}
\label{ex-metrics}
For the space $\calm\Sigma_{\mathrm{open}}$ of riemannian metrics
defined on open subsets of $\Sigma$, the tangent space at a metric $g$
defined on all of $\Sigma$ consists of the smooth symmetric covariant
2-forms on $\Sigma$, even though the domain of a path through $g$ may
shrink as the path parameter varies.
\end{ex}

It is possible to define a tangent vector {\em space} at each point of a
diffeological space by taking formal linear combinations of $1$-jets
of curves, as in \cite{he-ma:diffeological}, but it is then necessary to
introduce further relations, so that, for instance, the tangent space
at the conical singular point $0$ of the space $\reals/\integers_2$ 
reduces to zero, while the tangent cone does not.  (On the other hand,
the tangent space at the intersection point in the union
of the coordinate axes in the plane is
two-dimensional.)  

For a diffeological group, group multiplication induces a
 a vector space
structure on each tangent cone
\cite{le:diffeological}.

Cotangent spaces to diffeological spaces may be defined as spaces of
1-jets of smooth mappings to $\reals$; these all have vector space
structures derived from that on $\reals$.  The composition of
real-valued functions with curves defines a natural pairing (which can
be degenerate) between
tangent and cotangent spaces.

If $N$ and $E$ are
manifolds and $p$ is a submersion, then the finite jet spaces $J^k(Y,E)$ 
are also smooth manifolds with submersions to $Y$, and the infinite
jet space $J^\infty(Y,E)$ carries the projective limit diffeology in
which a map into it is smooth if all of the compositions with
projections into finite jet spaces are smooth.

\subsection{Diffeological groupoids}
\label{sec-diffgroupoid}

Diffeological groupoids are defined like
 topological groupoids   
\cite{ig:these}\cite{ig:diffeology}.
Recall that a category $C$
consists of a collection $C_0$ of objects and a collection $C_1$ of morphisms, with
target and source maps $l$ and $r$ to 
$C_0$ from $C_1$, a unit inclusion map $C_1 \stackrel{\epsilon}{\from}C_0$,
and a composition map to $C_1$ from 
$C_2=\{(f,g)\in C_1 \times C_1 |r(f)=l(g)\},$ satisfying the
usual axioms for associativity and units.  A {\bf groupoid} is a 
category\footnote{Many authors require a groupoid to be a {\bf small}
  category, in the sense the morphisms and objects form sets rather
  than just collections.  We will not make this assumption, but will
  occasionally point out how to replace ``large'' groupoids by small ones.}
 in which every morphism is invertible.

If $C_0$ and $C_1$ are diffeological
spaces, we give $C_2 \subseteq C_1\times C_1$ the subspace diffeology.  If
all of the structure maps are smooth, we say that $C$ is a
diffeological category.   If $C$ is, in addition, a groupoid, and the
map $\iota$ which takes each morphism to its inverse is smooth, then
$C$ is a {\bf diffeological groupoid}.   Note that we do not require
$l$ and $r$ to be submersions; in fact, this notion is better replaced
with that of ``subduction'' \cite{ig:diffeology} in the diffeological case.
But if a diffeological groupoid $C$ is a manifold and $l$
and $r$ are submersions, then $C$ is a Lie groupoid in the usual
sense.  

Defining the Lie algebroid of a Lie groupoid is not so simple,
even when $C_0$ is a single point, in which case
$C_1$ is a diffeological group.  A ``Lie algebra'' bracket
for such a group  $G$
 is defined in \cite{he-ma:diffeological} using the conjugation
operation of $G$ on itself.  It is a bilinear
operation on the tangent (vector) space $T_eG$ at the
identity, but antisymmetry and the Jacobi identity have been
established in \cite{le:diffeological} under some extra assumptions on $T_e
G$, holding for example in the case where $G$ is the group of diffeomorphisms of a
manifold $M$ with the functional diffeology.  In this case,
$T_e G$ is, as expected, the space $\calX M$ of vector fields on $M$,
and the bracket is the usual Lie algebra bracket.

If we replace $M$ above by a diffeological space $X$, it is no longer even
clear that inversion is smooth in the group of diffeomorphisms,
though this is the case for many $X$.  We can always define a
stronger diffeology by admitting as plots only those whose composition
with inversion is a plot in the functional diffeology.  The tangent
space at the identity may be identified with those vector fields which
are tangent to paths of diffeomorphisms.  

Let $G$ be a diffeological groupoid, and let $B(G)$ be its group of
bisections, i.e. smooth sections $\gamma$ of $r$ for which $l \circ
\gamma$ is a diffeomorphism.  We give $B(G)$ the diffeology in which
the plots are those plots for the subspace-functional diffeology for
which composition with inversion is also a plot.  The tangent space to
the identity then consists of smooth maps $TG_1 \stackrel{a}{\from} G_0$
lifting the unit section $\epsilon$ which are tangent to
smooth paths through the identity in $B(G)$.   The values of $a$ are
tangent to the $r$-fibres, so it is natural to consider them as
sections of the ``bundle'' $A(G)$ over $G_0$ which is the pullback by
$\epsilon$ of $\ker Tr$.  Without further assumptions, $A(G)$ is not a
vector bundle, but it should still play the role of the Lie algebroid.
To get a bracket operation, we follow \cite{he-ma:diffeological} and
\cite{le:diffeological} and use the natural action 
of the group $B(G)$ on $A(G)$.  Its
derivative at the identity with respect to the first variable 
gives a binary operation $[a,b]$ on sections of $A(G)$, defined {\em
a priori} only when $a$ is {\bf admissible} in the sense of being
tangent to a path in $B(G)$.  
In addition, there is an anchor which takes admissible 
sections of $A(G)$ to admissible vector fields, i.e. admissible
sections of $TG_0$.

Since the action of $B(G)$ preserves all the structures in sight, the 
operation of bracketing on the left by an admissible section is a
derivation with respect to both the bracket itself and multiplication
by functions.  In other words, we have:
$[a,fb]= f[a,b]+ (\rho(a)f)b$ for admissible $a$, functions $f$, and 
all sections $b$, and 
$[a,[b,c]]=[[a,b],c]+[b,[a,c]]$ for admissible $a$, $b$, and $c$.

\end{document}